\newcommand{\be}{\begin{equation}}
\newcommand{\ee}{\end{equation}}
\newcommand{\beq}{\begin{eqnarray}}
\newcommand{\eeq}{\end{eqnarray}}
\newtheorem{thm}{Theorem}[section]
\newtheorem{lma}{Lemma}[section]
\newtheorem{prop}{Proposition}[section]
\newtheorem{cor}{Corollary}[section]
\theoremstyle{remark}
\newtheorem{rem}{Remark}[section]
\numberwithin{equation}{section}
\def\tr{\operatorname{tr}}
\def\be{\begin{equation}}
\def\ee{\end{equation}}
\def\bee{\begin{equation*}}
\def\eee{\end{equation*}}
\def\ol{\overline}
\def\lf{\left}
\def\ri{\right}
\def\div{\mathrm{div}}
\def\wh{\widehat}
\def\mS{\mathbb{S}}
\def\cR{\mathcal{S}}
\def\Ric{\text{\rm Ric}}
\def\cS{\mathcal{S}}
\def\wt{\widetilde}
\def\la{\langle}
\def\ra{\rangle}
\def\p{\partial}
\def\ol{\overline}
\def\tr{\operatorname{tr}}
\def\e{\varepsilon}
\def\a{{\alpha}}
\def\b{{\beta}}
\def\R{\mathbb{R}}
\begin{document}
\title[]
{Singular metrics with negative scalar curvature}

\author{ Man-Chuen Cheng}
\address[Man-Chuen Cheng]{Department of Mathematics, The Chinese University of Hong Kong, Shatin, Hong Kong, China.}
\email{mccheng@math.cuhk.edu.hk}

 \author{Man-Chun Lee}
\address[Man-Chun Lee]{Department of Mathematics, Northwestern University, 2033 Sheridan Road, Evanston, IL 60208}
\curraddr{Mathematics Institute, Zeeman Building,
University of Warwick, Coventry CV4 7AL}
\email{mclee@math.northwestern.edu, Man.C.Lee@warwick.ac.uk}
\thanks{$^1$Research partially supported by EPSRC grant number P/T019824/1.}

\author{Luen-Fai Tam}
\address[Luen-Fai Tam]{The Institute of Mathematical Sciences and Department of Mathematics, The Chinese University of Hong Kong, Shatin, Hong Kong, China.}
 \email{lftam@math.cuhk.edu.hk}
\thanks{$^2$Research partially supported by Hong Kong RGC General Research Fund \#CUHK 14300420}

\renewcommand{\subjclassname}{
  \textup{2010} Mathematics Subject Classification}

\date{\today}

\begin{abstract}
{ Motivated by the work of Li and Mantoulidis \cite{LiMantoulidis2019},     we study singular metrics which are uniformly Euclidean $(L^\infty)$ on a compact manifold $M^n$ ($n\ge 3$) with negative Yamabe invariant $\sigma(M)$.  It is well-known that if $g$ is a smooth metric on $M$ with unit volume and with scalar curvature $\cR(g)\ge \sigma(M)$, then $g$ is Einstein.  We show,  in all dimensions,  the same is true for metrics with  edge singularities with cone angles $\leq 2\pi$ along codimension-2 submanifolds. We also show in three dimension, if  the Yamabe invariant of  connected sum of two copies of $M$ attains its minimum, then  the same is true for $L^\infty$ metrics with isolated point singularities.}

\end{abstract}

\keywords{scalar curvature, singular metrics}

\maketitle

\markboth{Man-Chuen Cheng, Man-Chun Lee and Luen-Fai Tam}{Singular metrics with negative scalar curvature}
\section{Introduction}\label{s-intro}

Let $M^n$ be a smooth compact oriented manifold. In this work, we always assume that $n\ge 3$. Let $\mathcal{C}$ be a conformal class of smooth Riemannian metrics on $M$, the {\it Yamabe constant of $\mathcal{C}$} is defined as:
$$
\mathcal{Y}(M,\mathcal{C})=\inf_{g\in\mathcal{C}}\displaystyle{\frac{\int_M\mathcal{S}_gdv_g}{(V(M,g))^{1-\frac2n}}},
$$
where $\mathcal{S}_g$ is the scalar curvature, $dV_g$ is the volume element and $V(M,g)$ is the volume of $M$ with respect to $g$. The {\it Yamabe invariant} introduced by Schoen \cite{Schoen1989}, see also Kobayashi \cite{Kobayashi1987},   is defined as
$$
\sigma(M)=\sup_{\mathcal{C}}\mathcal{Y}(M,\mathcal{C}).
$$
The supremum is taken among all conformal classes of smooth metrics on $M$.

 In case $\sigma(M)\le 0$,  it is known that if $g$ is a smooth metric on $M$ with unit volume and $\cR(g)\ge \sigma(M)$, then $g$ is Einstein with scalar curvature $\sigma(M)$. In case that $g$ is non-smooth,   in \cite{LiMantoulidis2019}, among other things Li and Mantoulidis proved the followings:

{\it
 \begin{enumerate}
   \item [(i)] \underline{Singularity of co-dimension two}:
 Let $M^n$ be a compact manifold with $\sigma(M)\le0$ and let $N$ be an embedded two sided submanifold of co-dimension two. Suppose $g$ is a metric on $M$ with $g\in C^\infty(M\setminus N)\cap L^\infty(M)$ so that $g$ has cone singularity near $N$ with certain order (see the definitions below). Suppose the $\cR(g)\ge0$ in $M\setminus N$, then $g$ can be extended smoothly to the whole manifold and $g$ is Ricci flat.

   \item [(ii)] \underline{Singularity of co-dimension three}: As in (i), suppose $\sigma(M)\le 0$,  $n=3$ and $N$ consists of finitely many points. If $g\in C^\infty(M\setminus N)\cap L^\infty(M)$ such that $\cS(g)\ge 0$ in $M\setminus N$, then $g$ can be extended smoothly to the whole manifold and $g$ is flat.
 \end{enumerate}
}

\bigskip

On the other hand, Shi and the second author \cite{ShiTam2018} studied similar questions under different assumptions. For example, they proved that if $\sigma(M)\le 0$ and if $g$ is a metric on $M$ with unit volume so that $g$ is smooth except on a set of codimension 2, $g$ is in $W^{1,p}$ for some $p>n$, with $\cR(g)\ge \sigma(M)$ in $M\setminus S$, then $g$ is Einstein on $M\setminus N$. Since $\sigma(M)$ may be negative, $\cR(g)$ could be negative.
Hence it is natural to ask if  results similar to (i) and (ii) are still true in the setting of $\sigma(M)<0$.

 First, let us   recall the notion of cone singularity defined as in \cite{LiMantoulidis2019} which was originated by  Atiyah-Lebrun \cite{AtiyahLebrun}. Let $M^n$ be a compact manifold without boundary with metric $g$ so that it is smooth on $M\setminus N$ where $N$ is an embedded two-sided submanifold of co-dimension 2. In a neighbourhood $U$ of $N$, $g$ is of the form:
\bee
g=dr^2+\b^2 r^2(d\theta+\zeta)^2+\omega+r^{1+\eta}h
\eee
on $U\setminus N$. Here we assume
\begin{itemize}
  \item $0<r<r_0$.
  \item $\b$ is a smooth function on $N$ so that
  $$
  0<\inf_N\b \le \sup_N\b\le 1.
  $$
  \item $\omega$ is smooth metric on $N$.
  \item $\zeta$ is smooth 1-form on $N$ with $||\zeta||_{C^2(N,\omega)}<\infty$.
  \item $h$ is a smooth symmetric two tensor on $U$ with $||h||_{C^2(U,\mathfrak{h})}<\infty$ where { $\mathfrak{h}$  is a smooth background metric on $M$.}
      \item $\eta>1$ is a constant. (This implies that   $r^{1+\eta}h$ is in $C^{2, \eta-1}$ in $U$.)

\end{itemize}
In this case, we say that $g$ has {\it cone singularity} at $N$ of order $\eta$.
Also, a measurable metric $g$ is said to be in $L^\infty(M)$ if
$$
\Lambda^{-1}\mathfrak{h} \le g\le \Lambda \mathfrak{h}
$$
for some smooth background metric $\mathfrak{h}$ for some positive constant $\Lambda\geq 1$.
\vskip .1cm

We have the following:

\begin{thm}\label{t-cone}
Let $M^n$ be a compact manifold without boundary so that  the Yamabe invariant $\sigma(M)$ of $M$ is negative. Let $g\in L^\infty(M)$ be a metric with cone singularity at an embedded two-sided submanifold $N$ with order $\eta>2-\frac 4n$ and $\eta> 1$ in case $n=3$ so that $V_g(M)=1$, and the scalar curvature satisfies $\cR(g)\ge \sigma$ on $M\setminus N$. Suppose $g\in L^\infty(M)$, then $g$ can be extended to a $C^{2,\gamma}$ metric on $M$  for some $\gamma>0$ so that $g$ is Einstein { with scalar curvature $\sigma$}.
\end{thm}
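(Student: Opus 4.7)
The strategy is to combine a desingularisation of the conical metric with the classical Yamabe rigidity for constant negative scalar curvature metrics. First I would construct a one-parameter family of smooth approximations $g_\e$ on $M$ which coincide with $g$ outside an $\e$-neighbourhood of $N$, replacing the model $dr^2+\b^2r^2(d\theta+\zeta)^2$ by a smoothed warped product $d\bar r^2+f_\e(\bar r)^2(d\theta+\zeta)^2$ with $f_\e(0)=0$, $f_\e'(0)=1$, and $f_\e(\bar r)=\b\bar r$ outside a small neighbourhood of the apex. The cone-angle hypothesis $\b\le 1$ makes the Gauss curvature of the transverse 2-disc concentrate \emph{nonnegatively} as $\e\to 0$ (the formal limit is the Dirac-type distribution $2\pi(1-\b)$ along $N$), and the order assumption $\eta>2-\tfrac{4}{n}$ (with $\eta>1$ if $n=3$) is precisely the threshold at which the remainder $r^{1+\eta}h$ contributes negligibly to the scalar curvature of $g_\e$ in the smoothing region. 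Together with $\cR(g)\ge\sigma(M)$ on $M\setminus N$, this yields
\[
\cR(g_\e)\,\ge\,\sigma(M)-o_\e(1)
\]
in a sense strong enough for the energy identity below, together with $V_{g_\e}(M)\to 1$.

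Next I would apply the classical Yamabe argument to each $g_\e$. Since $\sigma(M)<0$, the conformal class $[g_\e]$ admits a unique unit-volume Yamabe minimiser $\tilde g_\e=u_\e^{4/(n-2)}g_\e$ with $\cR(\tilde g_\e)\equiv Y(M,[g_\e])\le\sigma(M)$, and after normalising I may assume $V_{g_\e}(M)=1$. Multiplying the conformal equation $-\tfrac{4(n-1)}{n-2}\Delta_{g_\e}u_\e+\cR(g_\e)u_\e=Y(M,[g_\e])u_\e^{(n+2)/(n-2)}$ by $u_\e$ and integrating gives
\[
\tfrac{4(n-1)}{n-2}\int_M|\n u_\e|^2_{g_\e}\,dV_{g_\e}+\int_M \cR(g_\e)u_\e^2\,dV_{g_\e}=Y(M,[g_\e]).
\]
H\"older combined with $\int u_\e^{2n/(n-2)}\,dV_{g_\e}=1=V_{g_\e}(M)$ gives $\int u_\e^2\,dV_{g_\e}\le 1$, so using $\sigma(M)<0$ the identity forces $\int|\n u_\e|^2\,dV_{g_\e}\to 0$. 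Hence $u_\e\to 1$, $Y(M,[g_\e])\to\sigma(M)$, and in the limit $\int_M\cR(g)\,dV_g=\sigma(M)$. Combined with $\cR(g)\ge\sigma(M)$ a.e., this gives $\cR(g)\equiv\sigma(M)$ a.e.\ on $M\setminus N$ and forces the positive cone concentration along $N$ to vanish, i.e.\ $\b\equiv 1$.

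With $\b\equiv 1$, the metric $g$ realises the Yamabe constant of its conformal class on $M\setminus N$ and that class realises $\sigma(M)$. The Euler-Lagrange equation for the functional $\sigma$ at such a maximising class is the Einstein equation, so $g$ is Einstein on $M\setminus N$ with scalar curvature $\sigma(M)$. Finally, since the leading part of $g$ near $N$ is the smooth product $dr^2+r^2(d\theta+\zeta)^2+\omega$ and the remainder $r^{1+\eta}h$ has order $\eta>1$, elliptic regularity for the Einstein equation in harmonic coordinates adapted to this smooth background, together with the $L^\infty$ control $\Lambda^{-1}\mathfrak h\le g\le\Lambda\mathfrak h$, upgrades $g$ across $N$ to a $C^{2,\gamma}$ Einstein metric on all of $M$.

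The main obstacle is the uniform scalar-curvature bound on $g_\e$. The cone contribution, the cross-term involving $\zeta$, the transverse metric $\omega$, and the low-regularity perturbation $r^{1+\eta}h$ all enter the warped-product scalar-curvature expansion, and the thresholds $\eta>2-\tfrac{4}{n}$ (and $\eta>1$ for $n=3$) are exactly what make the $h$-error $o_\e(1)$ when integrated against $dV_{g_\e}$ over the shrinking smoothing region. A subsidiary difficulty is the final regularity upgrade: starting from an $L^\infty$ weak solution of the Einstein equation with an a priori conical ansatz, one must construct harmonic coordinates near $N$ in which the cone with $\b\equiv 1$ collapses to a smooth chart, so that standard elliptic bootstrapping applies.
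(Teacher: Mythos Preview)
Your smoothing step and the broad strategy match the paper, but there is a genuine gap in the passage from the energy identity to the conclusion $\b\equiv 1$. The statement ``$\cR(g_\e)\ge\sigma(M)-o_\e(1)$'' fails pointwise: inside $N_\e$ one only has $\cR(g_\e)\ge -C(1+r^{-2+\eta})$. Your threshold $\eta>2-\tfrac4n$ does make $(\cR(g_\e)-\sigma)_-$ small in $L^{n/2}(N_\e)$, so H\"older with $\|u_\e\|_{L^{2n/(n-2)}}=1$ salvages the inequality $\int\cR(g_\e)u_\e^2\ge\sigma-o(1)$ and hence $\int|\n u_\e|^2\to 0$. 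This indeed gives $u_\e\to 1$ in $W^{1,2}$ and forces $\int_{M\setminus N}\cR(g)\,dV_g=\sigma$, hence $\cR(g)\equiv\sigma$ on $M\setminus N$. But it does \emph{not} force the positive cone concentration to vanish. The submanifold $N$ has codimension two, so a logarithmic cutoff $\chi_\e$ equal to $0$ on $N_\e$ and $1$ outside $N_{\sqrt\e}$ has $\int|\n\chi_\e|^2=O(1/|\log\e|)\to 0$; nothing in your argument rules out the Yamabe minimiser $u_\e$ behaving this way near $N$. In that scenario $\int_{N_\e}\cR(g_\e)u_\e^2\to 0$ even though the cone defect $\int_N(1-\b)$ is strictly positive, and your chain of implications breaks. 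Without $\b\equiv 1$ the metric is still genuinely singular and the smooth Euler--Lagrange/Einstein argument does not apply.

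The paper closes exactly this gap. It first caps the large positive part of $\cR(g_\e)$ by a function $\kappa\le\cR(g_\e)$ with $\kappa\le 2\delta\e^{-2/q}$ in $N_\e$, chosen so that $\|\kappa\|_{L^{q'}}$ is bounded for some $q'>n/2$ and $\int_M\kappa$ is as small as desired. Then a Trudinger-type theorem (the paper's Theorem~\ref{t-Trudinger}) produces a solution $u$ of $-a\Delta_{g_\e}u+\kappa u=\lambda u^{P-1}$ with a \emph{uniform two-sided bound} $C^{-1}\le u\le C$, the constants independent of $\e$. The lower bound on $u$ is precisely what your argument lacks: with it, if $\b<1$ on some open subset of $N$, the capped concentration contributes $\int(\kappa-\sigma)_+u^2\ge C^{-1}\cdot C\e^{2-2/q}$, which dominates the error $\int(\kappa-\sigma)_-u^2=O(\e^2+\e^\eta)$ because $\eta>2-2/q$. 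This yields $\cR(\ol g_\e)\ge\lambda>\sigma$ for a unit-volume $C^{2,\gamma}$ metric $\ol g_\e=u^{4/(n-2)}g_\e$, contradicting LeBrun's characterisation $|\sigma(M)|=\inf_h\big(\int\cR(h)_-^{n/2}\big)^{2/n}$. Once $\b\equiv 1$ is established, the metric is $C^{2,\gamma}$ directly from its local form (no further elliptic bootstrap is needed), and the Einstein conclusion follows from the standard smooth rigidity argument of Schoen.
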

One crucial step in the proof is to smooth $g$ as   in \cite{LiMantoulidis2019}, which was motivated by the work of Miao \cite{Miao2002}. Unlike the case that the scalar curvature is nonnegative, it is important to normalize the metric so that the volume is unity. Hence  in order to prove the theorem, one has to keep track the volume when we try to smooth the metric. One then uses the method of Trudinger \cite{Trudinger1968} to obtain another smooth metric with scalar curvature larger than $\sigma(M)$ if the metric is not smooth or Einstein.

Corresponding to the result (ii) of \cite{LiMantoulidis2019}, we only have  partial results. Let $M^3$ be a compact manifold without boundary with $\sigma=:\sigma(M)<0$. Suppose $A$ is a finite set of points in $M$. { By Kobayashi \cite[Theorem 2]{Kobayashi1987}, it is known that the connected sum  $M\#M$ of two copies of $M$ along $A$ satisfies}
$$
\sigma(M\# M)\ge 2^\frac23\sigma.
$$
We have the following:

\begin{thm}\label{t-point-removable}
Let $M^3$ be a compact manifold with $\sigma=:\sigma(M)<0$. Let $A$ be a finite set of points in $M$ and   let $M\#M$ be the connected sum of two copies of $M$ along $A$. Suppose $g$ is a metric on $M$ with unit volume so that $g\in C^\infty(M\setminus A)\cap L^\infty(M)$ and   the scalar curvature of $g$ in $M\setminus A$ is at least $\sigma$.
  If $\sigma(M\# M)=2^\frac23\sigma$, then $g$ can be extended to  $A$ so that $g$ is a $C^{2,\a}$ metric with constant { scalar curvature $\sigma$.}
\end{thm}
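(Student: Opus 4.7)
The plan is to double $g$ across the singular set $A$, regularise the resulting $L^\infty$-metric, and invoke the saturation $\sigma(\tilde M)=2^{2/3}\sigma$ to force it to be smoothly Einstein, where $\tilde M:=M\#M$. Concretely, I would first build a metric $\tilde g$ on $\tilde M$ by taking two copies of $(M\setminus A,g)$ and gluing them along small annular necks around each point of $A$, following the geometric connected-sum construction underlying Kobayashi's bound. Since $g\in L^\infty(M)$, the doubled metric $\tilde g$ lies in $L^\infty(\tilde M)$, is smooth outside a finite set $\tilde A\subset\tilde M$ corresponding to $A$, has $V_{\tilde g}(\tilde M)=2$, and satisfies $\cR(\tilde g)\ge \sigma$ on $\tilde M\setminus\tilde A$. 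Rescaling, $\hat g:=2^{-2/3}\tilde g$ has unit volume with $\cR(\hat g)\ge 2^{2/3}\sigma=\sigma(\tilde M)$ on its smooth locus.

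\textbf{Step 2 (Smoothing).} Next I would smooth $\hat g$ through the codimension-three singular set $\tilde A$ by adapting the point-singularity regularisation used in result (ii) of \cite{LiMantoulidis2019}, combined with a Trudinger-type conformal correction as in \cite{Trudinger1968}. The output should be a family of smooth metrics $\hat g_k$ on $\tilde M$ with $\hat g_k\to\hat g$ uniformly on $\tilde M$, $V_{\hat g_k}(\tilde M)\to 1$, and $\cR(\hat g_k)\ge \sigma(\tilde M)-\varepsilon_k$ pointwise for some $\varepsilon_k\searrow 0$. The Trudinger correction, already used in the proof of Theorem~\ref{t-cone}, is what converts the $L^p$-integrability produced by the raw mollification into a pointwise scalar-curvature bound required below.

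\textbf{Steps 3 and 4 (Saturation, rigidity and descent).} For each large $k$, let $\bar g_k\in[\hat g_k]$ be the unit-volume Yamabe representative; its scalar curvature is the constant $\mathcal{Y}(\tilde M,[\hat g_k])\le\sigma(\tilde M)$. Writing $\bar g_k=u_k^{4}\hat g_k$ and applying the maximum principle to the resulting Yamabe equation, together with $\cR(\hat g_k)\ge \sigma(\tilde M)-\varepsilon_k$ and $\sigma(\tilde M)<0$, yields the reverse bound $\mathcal{Y}(\tilde M,[\hat g_k])\ge \sigma(\tilde M)-o(1)$, so $\bar g_k$ is an almost-minimiser of constant scalar curvature tending to $\sigma(\tilde M)$. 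The classical rigidity for $\sigma(\tilde M)\le 0$ then forces $\bar g_k$ to be asymptotically Einstein; a subsequential $C^{2,\gamma}$-limit on $\tilde M\setminus\tilde A$ together with elliptic bootstrap on the Einstein equation in harmonic coordinates extends $\hat g$ to a smooth Einstein metric on all of $\tilde M$ with scalar curvature $\sigma(\tilde M)$. Finally, $\tilde g$ is invariant under the canonical $\mathbb{Z}/2$-involution of the double swapping the two copies of $M$, so its smooth Einstein completion is invariant as well and restricts to a $C^{2,\a}$ Einstein extension of $g$ across $A$ with scalar curvature $\sigma$.

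\textbf{Main obstacle.} The essential difficulty lies in Step 2: regularising an $L^\infty$-metric through isolated point singularities on a $3$-manifold while preserving the pointwise lower bound $\cR\ge\sigma(\tilde M)-o(1)$ and the volume. Because $\sigma(\tilde M)<0$, one cannot discard positive error terms arising from the mollification (as one can in the non-negative scalar-curvature setting of result (ii) in \cite{LiMantoulidis2019}), and the global Trudinger conformal correction becomes indispensable for passing from an $L^p$-bound on the scalar-curvature deficit to the pointwise statement needed for the saturation argument in Step 3; the scheme closely mirrors the one announced after Theorem~\ref{t-cone}. Once Step 2 is carried out, the saturation, rigidity, and descent follow along standard Yamabe-problem and Einstein-regularity lines.
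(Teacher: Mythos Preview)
Your proposal has a genuine gap at Steps 1--2, and the approach is not the one the paper takes.

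\textbf{The doubling in Step 1 does not produce what you claim.} To form $M\#M$ from two copies of $(M\setminus A,g)$ you must excise small balls around $A$ and glue along the boundary spheres. The resulting metric $\tilde g$ then has a \emph{corner along a hypersurface} (the gluing sphere), not an isolated point singularity; it is certainly not ``smooth outside a finite set $\tilde A$''. Moreover, the mean curvatures of the gluing sphere seen from the two sides are equal, not opposite, so Miao's smoothing across the corner produces a scalar-curvature defect whose $L^{3/2}$-norm need not be small. You also lose control of the volume: the excised balls have definite $g$-volume, so $V_{\tilde g}(\tilde M)=2$ fails.

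\textbf{Step 2 assumes the main difficulty.} Even granting some version of Step 1, your Step 2 asserts that one can regularise an $L^\infty$ metric through \emph{point} singularities in dimension three while preserving a pointwise lower bound $\cR\ge\sigma(\tilde M)-o(1)$. No such procedure is available: the Trudinger-type correction used for Theorem~\ref{t-cone} relies on the codimension-two structure (the scalar curvature of $g_\e$ lies in $L^q$ for some $q>n/2$, which is exactly what fails for point singularities), and the Li--Mantoulidis argument for result (ii) does \emph{not} smooth through the point directly either. What you are calling a technical step is the heart of the problem.

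\textbf{What the paper does.} The paper never forms $\tilde g$ by naive doubling. Instead it conformally blows up each $p\in A$ via a Green's function for $-8\Delta_g+\eta$, turning the point into an asymptotically Euclidean end. In this end it finds a \emph{stable minimal sphere} $S_\e$ (using Meeks--Yau, since $\cR$ may be negative) that bounds the end and shrinks to $p$ as $\e\to 0$. Two copies of the compact piece $M_\e$ bounded by $S_\e$ are then glued along $S_\e$; because $S_\e$ is minimal, Miao's smoothing controls $\cR_-$ in $L^{3/2}$, and Lemma~\ref{l-Lebrun} converts the hypothesis $\sigma(M\#M)=2^{2/3}\sigma$ into the inequality $|\sigma|^{3/2}\le\int_{M_\e}\cR(g_\e)_-^{3/2}$. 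A contradiction argument (assuming $\cR(g)>\sigma$ somewhere, or $\mathring{\Ric}\not\equiv 0$) then forces $g$ to be Einstein on $M\setminus A$, and Smith--Yang removes the singularity. The serious work is in locating $S_\e$ and showing $V_{g_\e}(M_\e)\to 1$; your outline bypasses both.
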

We basically follows the strategy in \cite{LiMantoulidis2019}. However, their method cannot be carried over directly. The difficulties are to find stable minimal spheres and try to control the volume of some smoothing metric. See the proof of the theorem in \S \ref{s-point} for more details. The result (ii)   of Li-Mantoulidis  can be rephrased as follows: Let $M^3$  be a compact three manifold. Suppose there is a metric $g$ on $M$ which is in $C^\infty(M\setminus A)\cap L^\infty(M)$ for some finite set $A$ so that $\cR(g)\ge0$ { on $M\setminus A$ and $\cR(g)>0$ somewhere}, then $\sigma(M)>0$. In this form, we have another partial result:

\begin{prop}\label{p-point} Let $(M^3,g)$ be a compact manifold without boundary and $g\in C^\infty(M\setminus A)\cap L^\infty(M)$ for some finite set $A$ with unit volume. Suppose $\cR(g)\ge \sigma$ with $\sigma<0$ is a constant and suppose $\cR(g)>0$ near any point in $A$. $\sigma(M)>\sigma$.
\end{prop}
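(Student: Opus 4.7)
The plan is to produce, for each sufficiently small $\e>0$, a smooth metric $g_\e$ on $M$ such that $\mathcal{Y}(M,[g_\e])>\sigma$; since $\sigma(M)\geq\mathcal{Y}(M,[g_\e])$ by definition of the Yamabe invariant, this yields $\sigma(M)>\sigma$.

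First I would smooth $g$ only in a neighborhood of the finite singular set $A$. Choose disjoint small balls $B_r(p_i)$ contained in a neighborhood of $A$ on which $\cR(g)>0$, and replace $g$ inside these balls by a mollification --- either a localized Miao-type convolution in coordinates as in \cite{LiMantoulidis2019}, or a short-time Ricci--DeTurck flow of $L^\infty$ initial data as used by the second author and collaborators. The smooth metrics $g_\e$ so produced should be arranged to satisfy:
\begin{enumerate}
\item[$(a)$] $V_{g_\e}(M)\to 1$ as $\e\to 0$;
\item[$(b)$] $\cR(g_\e)\geq \sigma$ on $M$;
\item[$(c)$] $\cR(g_\e)\geq \sigma+\delta_0$ on a fixed non-empty open set $W\subset\bigcup_i B_r(p_i)$, where $\delta_0>0$ and $W$ are independent of $\e$.
\end{enumerate}
Outside the smoothing balls one has $g_\e=g$, so $(b)$ is automatic there; inside, the strict positivity of $\cR(g)$ provides the slack needed to maintain $\cR(g_\e)\geq\sigma$ pointwise and to furnish $(c)$ on a definite subset.

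Next I would combine $(b)$--$(c)$ with a Yamabe eigenvalue estimate. Define
\begin{equation*}
\lambda_\e:=\inf_{\varphi\not\equiv 0}\frac{\int_M(8|\n\varphi|^2+(\cR(g_\e)-\sigma)\varphi^2)\,dV_{g_\e}}{\int_M\varphi^2\,dV_{g_\e}}.
\end{equation*}
By $(b)$ the quadratic form is non-negative, and by $(c)$ together with a uniform Poincar\'e inequality (which holds since the $g_\e$ are uniformly bi-Lipschitz to a fixed background), it admits a uniform lower bound $\lambda_\e\geq\lambda_0>0$ independent of $\e$. For any $u>0$,
\begin{equation*}
\int_M(8|\n u|^2+\cR(g_\e)u^2)\,dV_{g_\e}\geq(\sigma+\lambda_\e)\int_M u^2\,dV_{g_\e}.
\end{equation*}
If $\sigma+\lambda_\e\geq 0$ the Yamabe functional satisfies $Q_{g_\e}(u)\geq 0>\sigma$; otherwise the H\"older bound $\int u^2\leq(\int u^6)^{1/3}V_{g_\e}^{2/3}$ together with the negative sign of $\sigma+\lambda_\e$ give
\begin{equation*}
Q_{g_\e}(u)=\frac{\int(8|\n u|^2+\cR(g_\e)u^2)\,dV_{g_\e}}{(\int u^6\,dV_{g_\e})^{1/3}}\geq(\sigma+\lambda_\e)V_{g_\e}^{2/3}.
\end{equation*}
Taking the infimum over $u$, $\mathcal{Y}(M,[g_\e])\geq(\sigma+\lambda_\e)V_{g_\e}^{2/3}$; as $\e\to 0$ the right-hand side tends to $\sigma+\lambda_0>\sigma$ by $(a)$. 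Hence $\sigma(M)>\sigma$ for all sufficiently small $\e$.

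The main obstacle is Step~1, namely simultaneously securing $(a)$--$(c)$. The positivity $\cR(g)>0$ near $A$ is not assumed uniform and may degenerate to $0$ as one approaches $A$, so a naive mollification could violate $(b)$ in a shrinking neighborhood of the singularity. A natural remedy is a two-stage procedure: first apply a conformal change supported near $A$ (feasible because the conformal Laplacian $-8\Delta_g+\cR(g)$ is coercive on small enough neighborhoods of each $p_i$ given $\cR(g)>0$ there) to upgrade the positivity to a uniform lower bound $\cR\geq c_0>0$ on a fixed neighborhood of $A$; the subsequent Miao-type mollification then introduces only $O(\e)$ perturbation of $\cR$, which is absorbed by $c_0$. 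Volume control $(a)$ follows by letting the smoothing support shrink with $\e$.
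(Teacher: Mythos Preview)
Your Step~2 (the eigenvalue lower bound and the Yamabe estimate) is fine once (a)--(c) are in hand, but Step~1 has a real gap: neither smoothing mechanism you invoke controls scalar curvature at an arbitrary $L^\infty$ point singularity. Miao's mollification is tailored to corners along a \emph{hypersurface} with matching induced metrics, and the smoothing in \cite{LiMantoulidis2019} treats \emph{cone} singularities along codimension-$2$ submanifolds with a prescribed asymptotic structure; neither applies to a general $L^\infty$ metric at an isolated point. Convolution in coordinates mollifies the metric coefficients, not the scalar curvature, which is a second-order nonlinear expression in $g$; for a merely $L^\infty$ metric the scalar curvature of the mollified metric can blow up, so the asserted ``$O(\e)$ perturbation of $\cR$'' is unjustified. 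Your conformal pre-conditioning does not help: after the conformal change the metric is still only $L^\infty$ at $p$, so the subsequent mollification faces the same obstacle. The Ricci--DeTurck route is more promising but would require citing and carefully applying recent results on preservation of distributional scalar curvature lower bounds under flow from $C^0$ data, and localizing the flow so that $g_\e=g$ outside a shrinking ball is itself delicate.

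The paper avoids smoothing at the point altogether. It builds a Green's function for $-8\Delta_g+\kappa$ with $\kappa\ge 0$ supported where $\cR(g)>0$, uses it to conformally blow up each $p\in A$ to an asymptotically Euclidean end, and finds (via Meeks--Yau) a stable minimal two-sphere $S_\e$ near that end. Because $S_\e$ sits in the region where the blown-up metric has positive scalar curvature, one gets $\lambda_1(-\Delta_{S_\e}+K)>0$, so the Mantoulidis--Schoen construction caps $S_\e$ with a three-ball of \emph{nonnegative} scalar curvature. Miao's technique now applies legitimately across the hypersurface $S_\e$, and LeBrun's characterization $|\sigma(M)|^{3/2}\le\int_M\cR_-^{3/2}$ finishes the argument: the cap contributes nothing to the integral, while the strict inequality $\cR(g)>0>\sigma$ on a set of positive measure forces $|\sigma(M)|<|\sigma|$.
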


The  paper is organized as follows.  We will prove Theorem \ref{t-cone} in \S\ref{s-cone}, and will prove Theorem \ref{t-point-removable} and Proposition \ref{p-point} in \S\ref{s-point}. In \S\ref{s-eigenvalue}, we will study the connectivity of metrics with fixed area on $\mS^2$ so that the first eigenvalue of $-\Delta+K$ is bounded below by a nonpositive constant. This question is related to the proof of (ii) in \cite{LiMantoulidis2019} and similarly to the proof of Proposition \ref{p-point}. This is also related to the estimate of the generalized Bartnik mass by Cabrera Pacheco,   Cederbaum and McCormick \cite{CCM2018}. In \S\ref{s-variational} we will sketch a proof of a modified version of Trudinger's  theorem \cite{Trudinger1968}.  In the appendix, we sketch a  proof  the existence of Green's function, which   will be used to obtain the main results of the paper.

The authors would like to thank Kwok-Kun Kwong and Andrejs Treibergs for some useful discussion.

\section{A variational problem}\label{s-variational}

In this section, we will give a proof of the following result which is modified from the celebrated work of Trudinger  \cite[Theorem 2]{Trudinger1968}. { The result will be used later.}

\begin{thm}\label{t-Trudinger}
Let $M^n$ be a compact manifold without boundary with a smooth background metric $\mathfrak{h}$. Suppose $g$ is a $C^{2,\theta}$ metric for some $\theta>0$ such that
$$
\Lambda^{-1} \mathfrak{h}\le g\le \Lambda\mathfrak{h}
$$
for some positive constant $\Lambda>0$ and such that $V_g(M)=1$. For any $\a>0$ and $Q>\frac n2$, there is $\e>0$ depending only on $n, \a, \Lambda, Q$ and $\mathfrak{h}$ such that if   $f$ is a measurable function on $M$ satisfying
$$
||f||_Q=\lf(\int_M |f|^Q dV_g\ri)^\frac1Q\le \a,
$$
and
$$
\int_M fdV_g\le \e,
$$
then there is a $W^{1,2}$ function $u\ge0$ which is a weak solution of
\bee
-a\Delta_gu+fu=\lambda u^{P-1}
\eee
where $
P= \frac{2n}{n-2}, \  a=\frac{4(n-1)}{n-2}
$
and $\lambda=\int_M (a|\nabla_g u|^2+fu^2) dV_g$. Moreover, $ u\in L^P$ with $\int_M u^P dV_g=1$, and $C^{-1}\le u\le C$ for some positive constant $C,\gamma$ depending only on $n, \a, \Lambda, Q,\mathfrak{h}$.
\end{thm}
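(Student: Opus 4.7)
The plan is to follow Trudinger's classical subcritical approximation strategy \cite{Trudinger1968}, tailored to the presence of the coefficient $f$. For each $p\in(2,P)$ consider the constrained variational problem
$$\mu_p:=\inf\lf\{I(u)\ :\ u\in W^{1,2}(M),\ u\ge0,\ \|u\|_{L^p(dV_g)}=1\ri\},\quad I(u):=\int_M\lf(a|\n_g u|^2+fu^2\ri)\,dV_g.$$
Since $Q>\frac n2$ gives $\frac{2Q}{Q-1}<P$, H\"older together with interpolation between $L^2$ and $L^P$ bounds $\int fu^2$ by $\|f\|_Q$ times a small multiple of $\|\n u\|_2^2$ plus a lower-order term, so $I$ is coercive on $W^{1,2}$. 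Because the subcritical embedding $W^{1,2}\hookrightarrow L^p$ is compact, the direct method (with $u$ replaced by $|u|$) yields a nonnegative minimizer $u_p$ satisfying, weakly,
$$-a\Delta_g u_p+fu_p=\mu_p u_p^{p-1},\qquad \|u_p\|_{L^p}=1.$$
The first key observation is a uniform upper bound on the Lagrange multiplier: since $V_g(M)=1$, the constant function $\mathbf{1}$ lies in the constraint set, so
$$\mu_p\le I(\mathbf{1})=\int_M f\,dV_g\le\e$$
for every $p\in(2,P)$.

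The heart of the matter, and the step I expect to be the main obstacle, is a uniform $L^\infty$ bound $\|u_p\|_\infty\le C$ independent of $p$, to be obtained by Moser iteration. Testing the equation against $u_p^{1+2\b}$ for $\b\ge0$ and setting $v=u_p^{1+\b}$ gives
$$\frac{a(1+2\b)}{(1+\b)^2}\int|\n v|^2\,dV_g+\int fv^2\,dV_g=\mu_p\int u_p^{p-2}v^2\,dV_g.$$
Using Sobolev $\|v\|_P^2\le C(\|\n v\|_2^2+\|v\|_2^2)$, H\"older in the form $|\int fv^2|\le\|f\|_Q\|v\|_{2Q/(Q-1)}^2$, H\"older on the right-hand side as $\int u_p^{p-2}v^2\le\|u_p\|_P^{p-2}\|v\|_{2P/(P-p+2)}^2$, and interpolation of both critical-type norms between $L^2$ and $L^P$, the smallness of $\mu_p$ and $\|f\|_Q$ (choosing $\e$ small depending on $\a,Q,\Lambda,n,\mathfrak{h}$) allows the critical-exponent contributions to be absorbed into the gradient term. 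The output is an iterable reverse-H\"older inequality of Moser type, schematically
$$\|u_p\|_{L^{P(1+\b)}}\le [C(1+\b)]^{C'/(1+\b)}\|u_p\|_{L^{2(1+\b)}},$$
which, upon iterating with $1+\b_{k+1}=\frac P2(1+\b_k)$ starting from $1+\b_0=\frac P2$, telescopes to $\|u_p\|_\infty\le C\|u_p\|_{L^P}\le C'$. A preliminary uniform $W^{1,2}$ bound (needed to start the iteration and to control $\|u_p\|_P$) comes from the $\b=0$ identity combined with the same interpolation used to handle $\int fu^2$.

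With the uniform $L^\infty$ bound in hand, the passage $p\to P$ is standard: extract a subsequence $u_p\rightharpoonup u$ in $W^{1,2}$, which by Rellich converges strongly in every $L^q$ with $q<\infty$, so $\|u\|_{L^P}=1$, $u\ge0$, and $u$ solves $-a\Delta_g u+fu=\lambda u^{P-1}$ weakly with $\lambda=\lim\mu_p\le\e$. A further Moser iteration on the limit equation gives $u\in L^\infty$, and a Harnack inequality for the linear operator $-a\Delta_g+(f-\lambda u^{P-2})$, whose zero-order coefficient lies in $L^Q$ with $Q>\frac n2$, combined with the fact that $\int u^P\,dV_g=1$ and $V_g(M)=1$ force $\sup u\ge1$, yields the lower bound $u\ge C^{-1}$.
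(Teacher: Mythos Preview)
Your proposal follows the same subcritical-approximation strategy as the paper (and Trudinger): minimize $I$ on $\{\|u\|_p=1\}$, observe $\mu_p\le I(\mathbf 1)=\int_M f\,dV_g\le\e$, exploit this smallness to get uniform estimates, then pass to the limit and apply Harnack for the lower bound. One inaccuracy: $\|f\|_Q$ is not small, only bounded by $\alpha$; the $f$-term is controllable because $\tfrac{2Q}{Q-1}<P$ is strictly subcritical, so interpolation plus Young (with $\delta$ depending on $\alpha$) suffices.

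The paper's execution is slightly more economical than your full Moser iteration. It tests with $U_q^\beta$ for a \emph{single} $\beta>1$ close to $1$, uses $\lambda_q\le\e$ to push $U_q^{q-2}$ into $L^r$ with $r>n/2$ uniformly in $q$, and then applies the Harnack inequality to the linear operator $-a\Delta+(f-\lambda_q U_q^{q-2})$ to obtain \emph{both} the upper and lower $L^\infty$ bounds on the subcritical minimizers, together with a uniform H\"older estimate, before letting $q\to P$. This sidesteps a point you glossed over: in a direct Moser iteration the borderline term $\mu_p\int u_p^{p-2}v^2$ carries a coefficient that a priori lies only in $L^{n/2}$, and the factor $\tfrac{(1+\beta)^2}{1+2\beta}$ in front grows with $\beta$, so the absorption does not close uniformly in $\beta$ using only the initial $\|u_p\|_P$ bound---you need to feed back the improved integrability from the first step into the H\"older splitting at subsequent steps, which your sketch does not make explicit.
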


The proof follows verbatim from that in \cite{Trudinger1968}. For the sake of completeness, we will give a sketch here { so that we can keep track of the constants involved.}

 In the following, all norm and connection are with respect to $g$. Let $2<q<P$ and consider the functional
$$F_q(u)=\int_M a |\nabla u|^2+fu^2 \;dV_g$$
subject to $\int_M|u|^qdV_g=1$. For notational convenience, denote
$$W^{1,2}_q=\left\{ u \in W^{1,2}: \int_M |u|^q dV_g =1\right\}$$
and $\lambda_q=\inf_{u\in W^{1,2}_q}F_q(u)$.

\begin{lma}\label{t-Trudinger-lma-0}
If $f\in L^Q$ for some $Q>n/2$, then {$\lambda_q$ is finite} for all $\frac{2Q}{Q-1}\leq q<P$.
\end{lma}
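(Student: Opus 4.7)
The goal is to show that $F_q$ is bounded below on the constraint set $W^{1,2}_q$; since the gradient term is manifestly nonnegative, the only danger is that $\int_M f u^2\,dV_g$ could blow down to $-\infty$ along a minimizing sequence. My plan is to rule this out by a direct H\"older estimate that leverages the exponent condition $q \geq \tfrac{2Q}{Q-1}$ together with the normalization $V_g(M)=1$.

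First, using $a>0$, one has
\begin{equation*}
F_q(u) \;\geq\; \int_M f u^2\,dV_g \;\geq\; -\int_M |f|\,u^2\,dV_g
\end{equation*}
for every $u \in W^{1,2}_q$. Next, I would apply H\"older's inequality with exponents $Q$ and $Q/(Q-1)$ to get
\begin{equation*}
\int_M |f|\,u^2\,dV_g \;\leq\; \|f\|_Q \left(\int_M u^{2Q/(Q-1)}\,dV_g\right)^{(Q-1)/Q}.
\end{equation*}
The key observation is that $(M, g)$ is a probability space (since $V_g(M)=1$), so the $L^r$ norms are monotone in $r$; the hypothesis $q \geq \tfrac{2Q}{Q-1}$ therefore gives $\|u\|_{2Q/(Q-1)} \leq \|u\|_q = 1$. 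Combining these steps,
\begin{equation*}
F_q(u) \;\geq\; -\|f\|_Q \;\geq\; -\alpha,
\end{equation*}
which shows $\lambda_q \geq -\alpha$, i.e.\ $\lambda_q$ is finite.

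There is no real obstacle here; the lemma is essentially the reason the exponent range $\tfrac{2Q}{Q-1} \leq q < P$ is singled out in the first place. The lower bound $q \geq \tfrac{2Q}{Q-1}$ is exactly what forces the negative part of $F_q$ to be controlled by the constraint, while $q<P$ (the subcritical Sobolev exponent) is not needed for mere finiteness but will matter later in Trudinger's scheme to ensure compactness of minimizing sequences and existence of the minimizer. I would note in passing that the resulting bound $\lambda_q \geq -\|f\|_Q$ depends only on $\|f\|_Q$ (not on $g$ or $q$), which is the form that will be useful when tracking constants in the larger theorem.
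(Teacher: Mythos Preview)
Your argument is correct and follows essentially the same route as the paper: bound $F_q(u)$ from below by $-\int_M |f|u^2$, apply H\"older with exponents $Q$ and $Q/(Q-1)$, and use $\|u\|_{2Q/(Q-1)}\le \|u\|_q=1$ (via $V_g(M)=1$) to conclude $\lambda_q\ge -\|f\|_Q$. The paper additionally remarks that the upper bound comes from testing with $u\equiv 1$, which you implicitly dismiss as trivial, and indeed it is.
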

\begin{proof}
Let $u\in W^{1,2}_q$, then
\begin{equation}
\begin{split}
F_q(u)&\geq \int_M f u^2 dV_g \\
&\geq -\left(\int_M |f|^Q dV_g \right)^{1/Q} \left(\int_M |u|^\frac{2Q}{Q-1}dV_g \right)^{1-1/Q}.
\end{split}
\end{equation}
Therefore, if $\frac{2Q}{Q-1}\leq q<P$, then $F_q(u)\geq -C$ for all $u\in W^{1,2}_q$ for some uniform $C>0$. This implies a lower bound of $\lambda_q$. {The upper bound follows by taking $1$ as a test function. This completes the proof.}
\end{proof}

\begin{lma}\label{t-Trudinger-lma-1}
If $f\in L^Q$ for some $Q>n/2$, then for all $\frac{2Q}{Q-1}\leq q<P$, there is $U_q\in W^{1,2}_q$ minimizing $F_q$ in $W^{1,2}_q$ so that
\begin{enumerate}
\item[(i)] $C_q^{-1}\leq U_q\leq C_q$ for some $C_q>1$;
\item[(ii)] $U_q$ solves $-a\Delta U_q +fU_q=\lambda_q U_q^{q-1}$ weakly;
\item[(iii)] $ -||f||_Q\leq \lambda_q\leq \int_M f dV_g\leq ||f||_Q$.
\item[(iv)]$||U_q||_{W^{1,2}}\leq A(n,\a,\mathfrak{h},\Lambda)$.
\item[(v)] $||U_q||_{L^P}\leq A(n,\a,\mathfrak{h},\Lambda)$.
\end{enumerate}
{ Here $A>0$ is a constant depending only on $ n,\a,\mathfrak{h},\Lambda.$}
\end{lma}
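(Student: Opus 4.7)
The plan is to apply the direct method in the calculus of variations, followed by standard elliptic regularity via Moser iteration, keeping careful track of the dependence of constants on $q$.

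For existence, I would fix $q$ with $\frac{2Q}{Q-1}\le q<P$ and take a minimizing sequence $u_k\in W^{1,2}_q$ for $F_q$. Since $F_q(|u|)=F_q(u)$ by Stampacchia (noting $|\nabla|u||\le|\nabla u|$ a.e.), I may take $u_k\ge0$. Because $V_g(M)=1$ and $\frac{2Q}{Q-1}\le q$, H\"older gives $\|u_k\|_{\frac{2Q}{Q-1}}\le \|u_k\|_q=1$, so $\int_M fu_k^2\,dV_g\ge -\|f\|_Q\ge -\a$. Together with $F_q(u_k)\to\lambda_q<\infty$ (from Lemma \ref{t-Trudinger-lma-0}) this yields $a\|\nabla u_k\|_2^2\le \lambda_q+\a$, and with $\|u_k\|_2\le\|u_k\|_q=1$ the sequence is bounded in $W^{1,2}$. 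Extracting a subsequence with $u_k\rightharpoonup U_q$ in $W^{1,2}$ and $u_k\to U_q$ strongly in $L^q$ (since $q<P$, the embedding is compact), and also strongly in $L^{2Q/(Q-1)}$, the Dirichlet energy is weakly lower-semicontinuous and $\int_M fu_k^2\to\int_M fU_q^2$ by H\"older. Hence $U_q\in W^{1,2}_q$ is a nonnegative minimizer, and (ii) follows from the standard Lagrange multiplier computation, which also identifies $\lambda_q$ with $F_q(U_q)$ upon testing against $U_q$ itself.

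The estimate (iii) is immediate: since $1\in W^{1,2}_q$ thanks to $V_g(M)=1$, one has $\lambda_q\le F_q(1)=\int_M f\,dV_g\le\|f\|_Q$ (by H\"older), while the lower bound $\lambda_q\ge-\|f\|_Q$ was already extracted above. Combining (iii) with the same H\"older bound for $\int_M fU_q^2\,dV_g$ gives
\[
a\|\nabla U_q\|_2^2=\lambda_q-\int_M fU_q^2\,dV_g\le 2\|f\|_Q\le 2\a,
\]
and with $\|U_q\|_2\le\|U_q\|_q=1$ we obtain (iv) with $A$ depending only on $n,\a,\mathfrak{h},\Lambda$ (using $\Lambda^{-1}\mathfrak{h}\le g\le\Lambda\mathfrak{h}$ to pass to $\mathfrak{h}$-norms). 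The bound (v) then follows from the Sobolev embedding $W^{1,2}(M,\mathfrak{h})\hookrightarrow L^P(M,\mathfrak{h})$ combined with the norm equivalence.

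For (i), I would apply Moser iteration to the equation in (ii). Testing against $U_q^{1+2\beta}\chi_{\{U_q\le L\}}$ and letting $L\to\infty$ for successive exponents, the $L^Q$-control of $f$ with $Q>n/2$ dominates the zeroth-order term via H\"older, and the subcritical term $\lambda_q U_q^{q-1}$ is controlled using the $L^P$ bound from (v) as the starting point of the iteration; this yields an upper bound $U_q\le C_q$. The lower bound $U_q\ge C_q^{-1}$ comes from applying Moser iteration to negative powers of $U_q$ (or, equivalently, a De\,Giorgi--Nash--Moser weak Harnack inequality to $U_q$, which is strictly positive somewhere since $\|U_q\|_q=1$). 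The main obstacle is keeping the constant $A$ in (iv) and (v) \emph{independent of $q$}: this is what forces the restriction $q\ge \frac{2Q}{Q-1}$, exactly the range where H\"older on the unit-volume manifold controls $\int f U_q^2$ by $\|f\|_Q$ alone; the constant $C_q$ in (i), by contrast, legitimately depends on $q$ through the nonlinearity $U_q^{q-1}$ in the Moser scheme.
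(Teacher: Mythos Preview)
Your argument is correct and follows essentially the same route as the paper: direct method for existence, Euler--Lagrange for (ii), the test function $1$ together with H\"older for (iii), Sobolev for (v), and Moser/Harnack for (i). The paper's treatment of (i) is marginally slicker: it rewrites the equation as $-a\Delta U_q+(f-\lambda_q U_q^{q-2})U_q=0$ and observes that $\|U_q^{q-2}\|_{q/(q-2)}=\|U_q\|_q^{q-2}=1$ with $\tfrac{q}{q-2}>\tfrac{n}{2}$, so the potential lies in $L^r$ for some $r>n/2$ and the Harnack inequality applies directly, without needing (v) as a separate input to the iteration.
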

\begin{proof}
Since $\lambda_q$ is finite, we can find a minimizing sequence $u_k$ such that $F_q(u_k)\to \lambda_q$. We may assume $u_k\geq 0$ by considering $|u_k|$. We may assume $u_k\to U_q$ weakly in $W^{1,2}$. Then Sobolev embedding implies that the convergence is in $L^Q$ and hence the $u_k\to U_q$ strongly by the same argument in \cite[Page 268]{Trudinger1968}. This implies $U_q\geq 0$ and $U_q\in W^{1,2}_q$.

Since $U_q$ is the minimizer for $\lambda_q$. For $v\in W^{1,2}$, we can consider the first order variation of $F_q(v_t)$ {at $t=0$} where $v_t=\frac{U_q+tv}{||U_q+tv||_q}$ to see that $U_q$ is a weak solution of
$$-a\Delta U_q+fU_q=\lambda_q U^{q-1}.$$
$U_q\ge0$. Since $\frac{q}{q-2}>\frac{n}{2}$ and $U_q\in L^q$, $U^{q-2}\in L^r$ for some $r>n/2$ { so that $||U_q||_q, ||U^{q-2}||_r$ are bounded by a constant depending only on $q, r, n$. Since $||U_q||_q=1$, by the Harnack inequality as in \cite{Trudinger1968},  these prove  (i) and (ii)}.

To prove (iii),   taking $u=1$ as a test function, we immediately see that
$$\lambda_q\leq \int_M f dV_g\leq ||f||_Q.$$

On the other hand,
\begin{equation}
\begin{split}
\lambda_q &=\int_M a|\nabla U_q|^2 +fU_q^2 dV_g \\
&\geq -||U_q||_q^2 ||f||_{\frac{q}{q-2}}\\
&\geq -||f||_Q.
\end{split}
\end{equation}

(iv) follows easily from the upper bound of $\lambda_q$ and the fact that $\int_M U^q dV_g=1$. (v) follows from (iv) and the Sobolev embedding theorem.
\end{proof}
Now we want to let $q\to P$ to obtain a weak solution to the desired equation.
\begin{lma}
There is $\e=\e(n,\mathfrak{h},\Lambda,\a,Q)>0$ such that { if $\int_M fdV_g<\e$},  the functional $F_P$ is minimized by a function $U\in W^{1,2}_N$. Moreover,  $U$ is a weak solution to
$$-a\Delta U+fU=\lambda U^{P-1}$$
and satisfies $C^{-1}\leq U\leq C$ and $U\in C^\theta$ for some $C=C(n,\mathfrak{h},\Lambda,\a,Q)>0$ and $ \theta(n,\mathfrak{h},\Lambda,\a,Q)>0$.
\end{lma}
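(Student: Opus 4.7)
The plan is to realize $U$ as a subsequential limit of the subcritical minimizers $U_q$ from Lemma \ref{t-Trudinger-lma-1} as $q\nearrow P$. Pick $q_k\in [\tfrac{2Q}{Q-1},P)$ with $q_k\to P$. By (iv) and (v) of Lemma \ref{t-Trudinger-lma-1}, $\{U_{q_k}\}$ is uniformly bounded in $W^{1,2}\cap L^P$, and by (iii) the Lagrange multipliers satisfy $\lambda_{q_k}\le \int_M f\,dV_g<\e$. After passing to a subsequence we get $U_{q_k}\rightharpoonup U\ge 0$ weakly in $W^{1,2}$ and in $L^P$, strongly in $L^r$ for every $2\le r<P$, and $\lambda_{q_k}\to \lambda\le\e$.

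The main obstacle is to establish strong $L^P$-convergence, equivalently $\|U\|_{L^P}=1$. Since $P$ is the critical Sobolev exponent, weak $L^P$-convergence does not suffice: part of the $L^P$-mass of $U_{q_k}$ could concentrate at a point and be lost in the limit. To rule this out I plan to exploit the smallness of $\int_M f\,dV_g$ together with the sharp Sobolev inequality on $(M,g)$. A standard concentration-compactness (bubbling) analysis shows that if mass were lost at some $p\in M$, then asymptotically $\int_M a|\nabla U_{q_k}|^2\,dV_g\ge aK_n^{-1}-o(1)$, where $K_n$ is the sharp Sobolev constant on $\R^n$; meanwhile the potential term $\int_M fU_{q_k}^2\,dV_g$ stays uniformly small by strong $L^{2Q/(Q-1)}$-convergence and $\|f\|_Q\le\a$. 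Choosing $\e=\e(n,\mathfrak{h},\Lambda,\a,Q)$ small enough — the dependence on $\Lambda$ and $\mathfrak{h}$ enters only through the comparison of the Sobolev constants of $g$ and $\mathfrak{h}$, controlled by $\Lambda^{-1}\mathfrak{h}\le g\le\Lambda\mathfrak{h}$ — forces $\lambda_{q_k}\ge aK_n^{-1}/2>\e$ in the concentration scenario, a contradiction. Hence $U_{q_k}\to U$ strongly in $L^P$, and $U\in W^{1,2}_P$.

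With strong $L^P$-convergence in hand, lower semi-continuity of the Dirichlet term together with continuity of the potential term yields $F_P(U)\le\liminf F_{q_k}(U_{q_k})=\lambda$, and comparing with arbitrary competitors $v\in W^{1,2}_P$ (tested against $F_{q_k}$ after $L^{q_k}$-renormalization) identifies $U$ as a minimizer of $F_P$; the Euler–Lagrange equation $-a\Delta U+fU=\lambda U^{P-1}$ then follows by the first-variation argument of Lemma \ref{t-Trudinger-lma-1}(ii) applied at exponent $P$. For the pointwise bounds, rewrite the equation as $-a\Delta U=(\lambda U^{P-2}-f)U$. Since $U\in L^P$ and $f\in L^Q$ with $Q>n/2$, the coefficient $\lambda U^{P-2}-f$ lies in $L^{n/2+\delta}$ for some $\delta>0$, so Moser iteration gives $U\le C$, the weak Harnack inequality applied to the nonnegative solution $U$ together with $\int_M U^P\,dV_g=1$ gives $U\ge C^{-1}$, and the De Giorgi–Nash–Moser estimate yields $U\in C^\theta$, all with constants depending only on $n,\mathfrak{h},\Lambda,\a,Q$.
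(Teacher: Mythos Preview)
Your route differs from the paper's, and it carries a genuine gap in the regularity step. You assert that since $U\in L^P$ and $f\in L^Q$ with $Q>n/2$, the coefficient $\lambda U^{P-2}-f$ lies in $L^{n/2+\delta}$. But $U\in L^P$ only gives $U^{P-2}\in L^{P/(P-2)}=L^{n/2}$, \emph{exactly} the borderline exponent at which the standard Moser iteration and Harnack inequality break down. Pushing $U$ into $L^{P+\delta'}$ (equivalently $U^{P-2}\in L^{n/2+\delta}$) is the substantive step here, and strong $L^P$-convergence of the $U_{q_k}$ does not supply it. One can repair this via a Brezis--Kato type argument, but that is not what you wrote, and you would still have to verify that the resulting constants depend only on $n,\mathfrak h,\Lambda,\a,Q$.

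The paper avoids concentration-compactness altogether and instead runs the bootstrap \emph{uniformly on the subcritical minimizers $U_q$} before taking any limit. One tests the equation for $U_q$ against $U_q^\beta$ for some fixed $\beta>1$ close to $1$ (chosen via $Q>n/2$), applies the Sobolev inequality to $U_q^{(\beta+1)/2}$, and observes that the dangerous term $\lambda_q\int U_q^{q-1+\beta}$ can be absorbed into the left-hand side precisely because $\lambda_q<\e$ is small. This yields a uniform bound on $\|U_q^{(\beta+1)/2}\|_{L^P}$, hence $U_q^{q-2}$ is bounded in $L^r$ for some $r>n/2$ independently of $q$; Harnack then gives uniform two-sided $L^\infty$ bounds and a uniform $C^{\theta_1}$ estimate on $U_q$, and the limit $q\to P$ is taken in $C^\theta$. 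Thus the smallness of $\e$ is used exactly once---in this absorption step---and the higher integrability that your argument lacks is obtained directly on $U_q$. Your approach trades that single estimate for a more delicate compactness dichotomy, but then still owes the same higher-integrability step at the end.
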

\begin{proof}
Let $\frac{2Q}{Q-1}\leq q<P$ and $U_q$ be the solution obtained from Lemma~\ref{t-Trudinger-lma-1}. Since $U_q>0$ and is bounded, by the estimates from  Lemma~\ref{t-Trudinger-lma-1}, $U_q^\b\in W^{1,2}$ for all $1<\b<P-1$.

For $1<\b<P-1$, by using the equation of $U_q$ and integration by part,
\begin{equation}
\begin{split}
\lambda_q \int_M U_q^{q-1+\b}&=\int_M a\b U_q^{\b-1}|\nabla U_q|^2 +fU_q^{\b+1}\\
&=\int_M \frac{4a\b}{(\b+1)^2}|\nabla U_q^\frac{\b+1}{2}|^2 +fU_q^{\b+1}\\
\end{split}
\end{equation}

Apply Sobolev inequality, if $\int_M fdV_g<\e$, then $\lambda_q<\e$ by Lemma \ref{t-Trudinger-lma-1} and hence,
\begin{equation}
\begin{split}
||U_q^\frac{\b+1}2||_{L^P}^2
&\leq C_1\left(\int_M \frac{4a\b}{(\b+1)^2}|\nabla U_q^\frac{\b+1}{2}|^2+U_q^{\b+1} \right)\\
&= C_1\left( \lambda_q\int_M U_q^{q-1+\b}-fU_q^{\b+1}+U_q^{\b+1} \right)\\
&\leq C_1 \left( \e\int_M U_q^{q-1+\b}-fU_q^{\b+1}+U_q^{\b+1} \right)
\end{split}
\end{equation}
for some $C_1(n,\mathfrak{h},\Lambda,\b)>0$. Since $Q>n/2$, by (v) of Lemma~\ref{t-Trudinger-lma-1}, we can choose $\b$ sufficiently close to $1$ depending only on $n,Q$ so that
$$\left|\int_M  f U_q^{\b+1}dV_g\right|+\left|\int_M U_q^{\b+1}dV_g\right|\leq C_2,$$
for some $C_2(n,\mathfrak{h},\Lambda,\b,\a,Q)>0$.
At the same time,
\begin{equation*}
\int_M U_q^{q-1+\b}dV_g =\int_M U_q^{q-2} U_q^{\b+1}dV_g\leq  C_3 ||U_q^\frac{\b+1}{2}||_P^2
\end{equation*}
for some $C_3(n,\mathfrak{h},\Lambda,\b,\a,Q)>0$. Therefore, if $\e<\frac1{2C_1C_3}$, then we have
$$||U_q^\frac{\b+1}{2}||_P\leq C_4(n,\mathfrak{h},\Lambda,\b,\a,Q).$$
In particular, $U_q^{q-2}$ is bounded in $L^r$ for some $r>n/2$. As $U_q\in W^{1,2}_q$ is the weak solution to the operator $-a\Delta +(f-\lambda_q U_q^{q-2})$, we therefore conclude that
$$C_5^{-1}\leq U_q\leq C_5$$
for some uniform $C_5(n,\mathfrak{h},\Lambda,\b,\a,Q)>1$ and hence $U_q$ is bounded in $C^{\theta_1}$ for some $\theta_1(n,\mathfrak{h},\Lambda,\b,\a,Q)>0$. Since $U_q$ is bounded in $W^{1,2}$, we may let $q\to P$ to obtain $U=\lim_{q\to P}U_q$. By the H\"older estimate of $U_q$, the convergence is in $C^\theta$ for $\theta<\theta_1$. By Lemma \ref{t-Trudinger-lma-1}(ii),  we conclude that $U$ satisfies weakly
$$-a \Delta U+fU=\lambda U^{P-1}$$
where $\lambda=\lim_{q\to P} \lambda_q$. This completes the proof.
\end{proof}

{
\begin{rem}
It is clear from the proof that Theorem~\ref{t-Trudinger} is still true if $g\in L^\infty(M)\cap C^{2,\theta}_{loc}(M\setminus S)$ where $S\subset M$ is compact with $\mathrm{Vol}(S)=0$. This is analogous to \cite[Lemma 4.1]{LiMantoulidis2019} where they studied the first eigenfunction of $-a\Delta+f$ instead. Using this, it might be possible to cover the case when $g$ is singular on a compact non-degenerate 1-skeleton as discussed in \cite{LiMantoulidis2019}.
\end{rem}

}

\section{Cone singularity}\label{s-cone}

In this section, we want to prove Theorem \ref{t-cone}. Let $M, N, g$ be as in the theorem so that
$$
\Lambda^{-1}\mathfrak{h} \le g\le \Lambda \mathfrak{h}
$$
for some smooth background metric $\mathfrak{h}$ and some positive constant $\Lambda$.
The first step is to  smooth $g$ as in  \cite{LiMantoulidis2019}. More precisely, take a smooth function $\phi$ with
 \bee
 \left\{
   \begin{array}{ll}
     \phi=0, & \hbox{on $[0, \frac13]$;} \\
     \phi=1, & \hbox{on $[\frac 23,1]$;} \\
     0\le \phi'\le 6;\\
\phi'=1, & \hbox{on $[\frac49, \frac59]$.}
   \end{array}
 \right.
 \eee
 For any $1\ge \e>0$ let
$$
f_\e(r,y)=1+\phi\lf(\frac r\e\ri)\lf( \frac1{\b(y)}-1\ri)
$$
for $y\in N$. Then
\be\label{e-f}
\left\{
  \begin{array}{ll}
    1\le f_\e\le C, & \hbox{for some $C>0$ independent of $\e$;} \\
    \p_rf_\e=
     0, & \hbox{on $N_{\frac13\e}$ and $N_\e\setminus N_{\frac 23\e}$;} \\
     \p_rf_\e=
\e^{-1}(\b^{-1}-1), & \hbox{on $N_{\frac 59\e}\setminus N_{\frac 49\e}$;}\\
0\le \p_rf_\e\le  6\e^{-1}(\b^{-1}-1),
  \end{array}
\right.
\ee
where $N_\e=\{x\in U|\ r(x)<\e\}$, etc.
$f_\e\ge1$  because $\b\le 1$. Moreover, $  f_\e\le C$ for some constant $C$ independent of $\e$ because $\b$ is bounded away from zero. Let
\be
g_\e=\b^2\lf(f_\e^2dr^2+r^2(d\theta+\sigma)^2+\b^{-2}\omega+f_\e^2r^{1+\eta}h\ri),
\ee
for $r<\e $ and let $g_\e=g$ outside $N_\e$.
Then $g_\e$ is $C^{2,\gamma}$ for some $\gamma>0$ independent of $\e$, because $\eta>1$.

\begin{lma}\label{l-smoothing-1}
\bee\label{e-smooth-1}
\left\{
  \begin{array}{ll}
    g_\e=g, & \hbox{outside $N_{\e}$;} \\
 ( \Lambda')^{-1}\mathfrak{h}\le g_\e\le   \Lambda' \mathfrak{h}, & \hbox{for some $\Lambda'>0$ independent of $\e$;} \\
    |V_{g_\e}(M)-1|\le C\e^2 & \hbox{for some $C>0$ independent of $\e$};\\
|\cR(g_\e)-2\b^{-2}r^{-1}f_\e^{-3}\p_r f_\e|\le C (1+r^{-2+\eta}) &\hbox{in $N_\e$ for some $C>0$ independent of $\e$} \end{array}
\right.
\eee
where $\cR(g_\e)$ is the scalar curvature of $g_\e$, provided $r<r_0$ for some $r_0>0$ independent of $\e$.
\end{lma}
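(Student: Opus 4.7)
Statements (1)--(3) of the lemma are straightforward bookkeeping. For the identity $g_\e = g$ outside $N_\e$: when $r \ge \tfrac{2}{3}\e$ one has $\phi(r/\e) = 1$, hence $f_\e = \b^{-1}$, and substituting $f_\e = \b^{-1}$ into the formula for $g_\e$ reproduces $g$ exactly. For the uniform bound, $g_\e$ differs from $g$ only in the $dr^2$ and $r^{1+\eta}h$ coefficients (both multiplied by $\b^2 f_\e^2$), while the $(d\theta+\zeta)^2$ and $\omega$ pieces are unchanged. Since $\inf_N \b > 0$ and $1 \le f_\e \le (\inf_N \b)^{-1}$, the factor $\b^2 f_\e^2$ lies between positive constants independent of $\e$, so $g_\e$ is uniformly comparable to $g$, and the assumed bound $\Lambda^{-1}\mathfrak{h} \le g \le \Lambda\mathfrak{h}$ transfers to $g_\e$ with a modified $\Lambda'$. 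For the volume bound, $g_\e = g$ outside $N_{2\e/3}$, and the volume form of either metric on $N_\e$ is comparable to $\b r \, dr \, d\theta \, dV_\omega \le C r \, dr \, d\theta \, dV_\omega$, giving $V_g(N_\e) + V_{g_\e}(N_\e) = O(\e^2)$ and hence $|V_{g_\e}(M) - 1| = O(\e^2)$.

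The scalar curvature estimate is the main calculation. Decompose $g_\e = g_\e^0 + p_\e$ with principal part $g_\e^0 = \b^2 f_\e^2\, dr^2 + \b^2 r^2(d\theta+\zeta)^2 + \omega$ and perturbation $p_\e = \b^2 f_\e^2 r^{1+\eta} h$ vanishing to order $r^{1+\eta}$. For each fixed $y \in N$, the $(r,\theta)$-slice of $g_\e^0$ is a rotationally symmetric $2$-disc $A^2 dr^2 + B^2 d\theta^2$ with $A = \b f_\e$ and $B = \b r$. Changing variables $\tilde r = \int_0^r A(s,y)\, ds$ puts this into standard polar form $d\tilde r^2 + \tilde B^2 d\theta^2$, and the formula $K = -\tilde B''/\tilde B$ yields after direct computation
\begin{equation*}
K = \frac{\p_r f_\e}{\b^2 r f_\e^3},
\end{equation*}
so that $2K = 2\b^{-2} r^{-1} f_\e^{-3} \p_r f_\e$ is the leading term in $\cR(g_\e^0)$. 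The remaining contributions to $\cR(g_\e^0)$ arise from the scalar curvature of $(N,\omega)$, the bounded twist $\zeta$, and the smooth $y$-dependence of $\b$; none of these introduces an $r$-singular term, so $\cR(g_\e^0) = 2K + O(1)$.

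The main obstacle is bounding the perturbation's contribution to $\cR(g_\e) - \cR(g_\e^0)$. Since $h$ has bounded $C^2$-norm as a tensor with respect to a smooth background $\mathfrak{h}$ (whose coordinate form satisfies $\mathfrak{h}_{\theta\theta} \sim r^2$), the coordinate components of $h$ obey $h_{rr}, h_{yy} = O(1)$, $h_{\theta\theta} = O(r^2)$, with analogous scalings for mixed components; hence $p_\e$ and its first two derivatives are controlled by appropriate powers of $r^\eta$. Expanding $\cR(g_\e) - \cR(g_\e^0)$ as a sum of contractions of the form $g_\e^{-1} g_\e^{-1} \p^2 p_\e$ and $g_\e^{-1} g_\e^{-1} \p g_\e^0 \cdot \p p_\e$ (plus higher-order terms in $p_\e$), and using $g_\e^{\theta\theta} = O(r^{-2})$ together with the fact that the only potentially large derivative of $g_\e^0$ is $\p_r f_\e = O(r^{-1})$ in the transition region $\tfrac{\e}{3} < r < \tfrac{2\e}{3}$, each such term is bounded by $O(r^{\eta-2})$. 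Combining this with the $O(1)$ remainder from $\cR(g_\e^0)$ yields the stated estimate.
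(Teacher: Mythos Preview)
Your proof is correct and follows the same overall architecture as the paper: the same decomposition $g_\e = g_\e^0 + p_\e$ into a principal ``cone'' part and an $r^{1+\eta}$--perturbation, with the same bound $O(r^{-2+\eta})$ on the perturbation's contribution to the scalar curvature. Your $g_\e^0$ coincides exactly with the paper's $\b^2\wt g_\e$.

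The one organizational difference is in how you compute $\cR(g_\e^0)$. You work directly: fix $y\in N$, compute the Gaussian curvature $K = \b^{-2}r^{-1}f_\e^{-3}\p_r f_\e$ of the two--dimensional fibre $A^2dr^2 + B^2d\theta^2$, and then argue that all remaining contributions (scalar curvature of $\omega$, twist $\zeta$, $y$--dependence of $\b$) are $O(1)$. The paper instead factors $g_\e^0 = \b^2\wt g_\e$ with $\wt g_\e = f_\e^2 dr^2 + r^2(d\theta+\zeta)^2 + \b^{-2}\omega$, cites Li--Mantoulidis for $|\cR(\wt g_\e) - 2r^{-1}f_\e^{-3}\p_r f_\e| \le C$, and then applies the conformal transformation law $\cR(\b^2\wt g_\e) = \b^{-2}\cR(\wt g_\e) - a\b^{-(n+2)/2}\Delta_{\wt g_\e}\b^{(n-2)/2}$, noting that the Laplacian term is bounded since $\b$ depends only on $y$. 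Your direct route is self--contained and makes the origin of the $\b^{-2}$ factor transparent; the paper's conformal route is more modular, isolating the $y$--dependence of $\b$ cleanly from the fibre geometry and reducing to a computation already in the literature. Both arrive at the same estimate.
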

\begin{proof} The conclusions of the lemma  are obvious except for the estimates of the scalar curvature. Let
$$
\wt g_\e=f_\e^2dr^2+r^2(d\theta+\zeta)^2+\b^{-2}\omega.
$$
By the proof of \cite[Proposition 3.2]{LiMantoulidis2019}, we have
\bee
 \lf|\cR(\wt g_\e)-2r^{-1}f_\e^{-3}\p_r f_\e\ri|\le C_1
\eee
for $r\le r_1$ with $r_1>0$ independent of $\e$.
Here and below, $C_i$ will denote a constant which is independent of $\e$.
On the other hand, {
\bee
\begin{split}
\cR(\b^2\wt g_\e)=&\b^{-\frac{n+2}2}\lf(-a\Delta_{\wt g_\e}+\cR(\wt g_\e)\ri)\b^\frac {n-2}2\\
=&\b^{-2}\cR(\wt g_\e)-a\b^{-\frac{n+2}2} \Delta_{\wt g_\e} \b^\frac {n-2}2
\end{split}
\eee
where $a=\frac{4(n-1)}{n-2}$.}
Since $\b$ depends only on $y$, by the condition on $\zeta$, we conclude that
$$
\lf|\cR(\b^2\wt g_\e)-2\b^{-2}r^{-1}f_\e^{-3}\p_r f_\e\ri|\le C_2
$$
for $r\le r_1$.
Now
\bee
g_\e=\b^2\wt g_\e+\b^2f_\e^2 r^{1+\eta}h.
\eee
As in \cite{LiMantoulidis2019},
$$
 \lf|\cR(\b^2\wt g_\e)-\cR(g_\e)\ri|\le C_3r^{-2+\eta}
 $$
 Hence the result follows.
\end{proof}

 \bigskip
By \eqref{e-f}, the  more detailed information on $\cR(g_\e)$ is as follows. Inside $N_\e$
\be\label{e-scalar-ge-1}
-C(1+r^{-2+\eta})\le \cR(g_\e)\le 12\b^{-2}r^{-1} \e^{-1} (\b^{-1}-1)+ C(1+r^{-2+\eta})
\ee
for some constant $C>0$ independent of $\e$.
\be\label{e-scalar-ge-2}
\left\{
  \begin{array}{ll}
    \cR(g_\e)=\cR(g), & \hbox{on $M\setminus N_\e$;} \\
-C_1(1+r^{-2+\eta}) \le    \cR(g_\e)\le C_1(1+r^{-2+\eta}), &   \hbox{on $N_{\frac13\e}$; and $N_\e\setminus N_{\frac 23\e}$;} \\
   C_2^{-1}\e^{-2}(\b^{-1}-1)- C_1(1+r^{-2+\eta})\le \cR(g_\e),\\
   \cR(g_\e) \le C_2\e^{-2}(\b^{-1}-1)+ C_1(1+r^{-2+\eta}) & \hbox{on $N_{\frac 59\e}\setminus N_{\frac 49\e}$.}
  \end{array}
\right.
\ee
if $\e$ is small enough, where $C_1, C_2>0$ are constants independent of $\e$. \bigskip

{ We will need the following Lemma which was proved originally by LeBrun in \cite[Proposition 4.1]{Lebrun1999}. }
\begin{lma}\label{l-Lebrun}
Let $M^n$ be a compact manifold without boundary so that $\sigma(M)\le 0$. Then $\sigma(M)$ can be characterized as follows:
\bee
|\sigma(M)|=\inf_{h} \lf(\int_M (\cR(h)_-)^\frac n2dV_h \ri)^\frac 2n
\eee
where the infimum is taken over all smooth metrics $h$ on $M$. Here $\cR(h)_-=-\min\{\cR(h),0\}$ is the negative part of the scalar curvature $\cR(h)$ of $h$.
\end{lma}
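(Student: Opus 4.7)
The plan is to prove both inequalities in the stated identity separately. Since $\sigma(M)\le 0$, every smooth metric $h$ on $M$ satisfies $\mathcal{Y}(M,[h])\le \sigma(M)\le 0$, hence $|\sigma(M)|=\inf_{[h]}|\mathcal{Y}(M,[h])|$. The main tool is the variational formula
$$\mathcal{Y}(M,[h])=\inf_{0<u\in C^\infty(M)}\frac{\int_M(a|\nabla u|^2+\cR(h)u^2)\,dV_h}{\lf(\int_M u^P\,dV_h\ri)^{2/P}},$$
where $P=\frac{2n}{n-2}$ and $a=\frac{4(n-1)}{n-2}$.

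For the upper bound $|\sigma(M)|\le \inf_h \lf(\int_M(\cR(h)_-)^{n/2}dV_h\ri)^{2/n}$, I would fix any smooth $h$ and any positive test function $u$. Since the exponents $n/2$ and $P/2=n/(n-2)$ are H\"older conjugate, splitting $\cR(h)\ge -\cR(h)_-$ and applying H\"older yields
$$\int_M \cR(h)u^2\,dV_h \ge -\int_M \cR(h)_- u^2\,dV_h \ge -\lf(\int_M(\cR(h)_-)^{n/2}\,dV_h\ri)^{2/n}\lf(\int_M u^P\,dV_h\ri)^{2/P}.$$
Dropping the non-negative gradient term, dividing by $(\int_M u^P\,dV_h)^{2/P}$, and passing to the infimum over $u$, I get $\mathcal{Y}(M,[h])\ge -(\int_M(\cR(h)_-)^{n/2}\,dV_h)^{2/n}$. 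Taking the infimum over smooth $h$ and using $\mathcal{Y}(M,[h])\le 0$ yields the claimed upper bound.

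For the reverse inequality, the idea is that in each conformal class the Yamabe minimizer exactly saturates the previous H\"older estimate. Fix a smooth $h$. Since $\mathcal{Y}(M,[h])\le 0<\mathcal{Y}(\mathbb{S}^n)$, we are strictly below the critical Sobolev threshold, and the classical resolution of the Yamabe problem in the non-positive case produces a smooth positive $u$ with $\tilde g=u^{P-2}h\in[h]$ satisfying $\cR(\tilde g)\equiv \mathcal{Y}(M,[h])$ and $V_{\tilde g}(M)=1$. A direct computation then gives
$$\lf(\int_M(\cR(\tilde g)_-)^{n/2}\,dV_{\tilde g}\ri)^{2/n}=|\mathcal{Y}(M,[h])|\cdot V_{\tilde g}(M)^{2/n}=|\mathcal{Y}(M,[h])|.$$
Thus $\inf_{h'}(\int_M(\cR(h')_-)^{n/2}dV_{h'})^{2/n}\le |\mathcal{Y}(M,[h])|$ for every smooth $h$, and taking the infimum over $h$ (equivalently, over conformal classes) completes the proof. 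The only non-elementary ingredient is the existence of the Yamabe minimizer, but this is the easy subcritical case handled by the direct method with no concentration or bubbling to worry about; every other step is an elementary manipulation with H\"older's inequality.
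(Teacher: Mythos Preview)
Your proof is correct and follows essentially the same approach as the paper: both directions rest on the H\"older pairing between $L^{n/2}$ and $L^{n/(n-2)}$, together with the existence of a unit-volume Yamabe minimizer of constant (non-positive) scalar curvature in each conformal class. The only minor difference is that for the inequality $|\sigma(M)|\le\inf_h(\int(\cR(h)_-)^{n/2})^{2/n}$ you bound the Yamabe quotient from below for \emph{every} test function and then pass to the infimum, whereas the paper plugs in the Yamabe minimizer directly; your variant avoids invoking the minimizer in that half of the argument, but the substance is the same.
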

\begin{proof}

Let $M^n$ be a closed manifold.   Let $\mathcal{C}$ be a conformal class of metrics. Then there is a metric $g\in \mathcal{C}$ with unit volume so that the scalar curvature of $g$ is the constant $\mathcal{Y}(M,\mathcal{C})$ by \cite{Trudinger1968}, see also \cite{Aubin1976,Schoen1984}.   Then
\bee
\begin{split}
 |\mathcal{Y}(M,\mathcal{C})|=&|\int_M \cR(g) dV_g|\\
 =&\int_M |\cR(g)_-|dV_g\\
 =&\lf(\int_M|\cR(g)_-|^\frac n2 dV_g\ri)^\frac 2n\\
 \ge& \inf_{h}\lf(\int_M|\cR(h)_-|^\frac n2 dV_h\ri)^\frac 2n
 \end{split}
\eee
because $\cR(g)=\mathcal{Y}(M,\mathcal{C})$ is a constant. From this we have:
$$
|\sigma(M)|\ge \inf_{h}\lf(\int_M|\cR(h)_-|^\frac n2 dV_h\ri)^\frac 2n.
$$

On the other hand, let $g$ be any metric and let $\mathcal{C}$ be the conformal class of $g$. Then there is a positive function $u$ with
$$
\int_M u^N dV_g=1
$$
where $N=2n/(n-2)$ and
\bee
\begin{split}
\mathcal{Y}(M,\mathcal{C})=&\int_M\lf( a|\nabla u|^2+\cR(g) u^2 \ri)dV_g\\
\ge &\int_M -\cR(g)_-u^2 dV_g
\end{split}
\eee
where $a=4(n-1)/(n-2)$. Hence
\bee
\begin{split}
|\sigma(M)|&\le
|\mathcal{Y}(M,\mathcal{C})| \\
 & \le\int_M  \cR(g)_-u^2 dV_g\\
&\le \lf(\int_M\cR(g)_-^\frac n2dV_g\ri)^\frac 2n \lf(\int_M u^N dV_g\ri)^\frac{n-2}n\\
&\le \lf(\int_M\cR(g)_-^\frac n2dV_g\ri)^\frac 2n.
\end{split}
\eee

So
$$
|\sigma(M)|\le \inf_{h}\lf(\int_M|\cR(h)_-|^\frac n2 dV_h\ri)^\frac 2n.
$$
Hence the lemma is true.
\end{proof}

It is easy to see that { by approximation the lemma is still true if the infimum is taken over all $C^2$ metrics which are H\"older in the second order derivatives.}

\vskip.1cm

  Now we are read to prove Theorem \ref{t-cone}.
\begin{proof}[Proof of Theorem \ref{t-cone}] For any $1\ge \e>0$ small enough, let $\wt g_\e=v_\e^{-\frac n2}g_\e$ where $v_\e=V_{g_\e}(M)$, so that $V_{\wt g_\e}(M)=1$ and $\cR(\wt g_\e)=v_\e^{\frac n2}\cS(g_\e)$. For notational convenience, we will still denote $\wt g_\e$ by $g_\e$.  By Lemma \ref{l-smoothing-1},
\be\label{e-smooth-2}
\left\{
  \begin{array}{ll}
    g_\e=v_\e^{ -\frac{2}{n}}  g, & \hbox{outside $N_{\e}$;} \\
 ( \Lambda)^{-1}\mathfrak{h}\le g_\e\le   \Lambda \mathfrak{h}, & \hbox{for some $\Lambda>0$ independent of $\e$;} \\
 |v_\e-1|\le C\e^2 & \hbox{for some $C>0$ independent of $\e$};\\
|\cR(g_\e)-2v_\e^{\frac n2}\b^{-2}r^{-1}f_\e^{-3}\p_r f_\e|\le C (1+r^{-2+\eta}) &\hbox{in $N_\e$ for some $C>0$ independent of $\e$.} \end{array}
\right.
\ee

\bigskip
 We want to solve
$$
-a\Delta_g u+\kappa u=\lambda u^P
$$
for some $\kappa$. To apply Theorem \ref{t-Trudinger}, fix $\lambda_0>0$ so that
$$
\eta>\max\left\{1,2-\frac4{(1+\lambda_0)n}\right\}.
$$
Let $q$ be such that $\frac n2<q<\frac{(1+\lambda_0)n}2.$ Then
$$
\eta>2-\frac 2q.
$$
In order to apply Theorem \ref{t-Trudinger}, we have to modify $\cR(g_\e)$ as in \cite{LiMantoulidis2019}. Namely, let
$\wt\rho$ be a smooth function on $\R$ so that $\wt\rho(t)=t$ for $t\le 1$, $\wt\rho(t)=2$ for $t\ge 3$, so that $\wt\rho'\ge0$, and $\wt\rho(t)\le t$ for all $t$. Let $\rho$ be a smooth function on $\R\times \R^+$ so that
\be\label{e-rho-1}
\rho(t,s)=s\wt \rho(\frac ts).
\ee
Then for any $s>0$,
 \be\label{e-rho-2}
 \left\{
   \begin{array}{ll}
     \rho(t,s)=t  & \hbox{for $t\le s$;} \\
     \rho(t,s)=2s, & \hbox{for $t\ge 3s$;} \\
     \rho(t,s)\le t, & \hbox{for all $t$;}\\
\p_t\rho\ge0.
   \end{array}
 \right.
\ee

 Define a smooth function $\tau$ on $M$ so that
\begin{equation}
\left\{
\begin{array}{ll}
\tau=1 &\hbox{outside } N_{2\e};\\
\tau=\e^{-\frac2q}&\hbox{on } N_{\e};\\
0<\tau\le \e^{-\frac2q} &\hbox{on } M.\\
\end{array}
\right.
\end{equation}
Fix $1\ge \delta>0$, define
$$
\kappa(x)=\rho(\cR(g_\e)(x), \delta \tau(x))
$$
Then in $M\setminus N_{2\e}$,
\be\label{e-kappa-1}
\sigma-C_1\e^2\le\kappa(x)\le 2\delta
\ee
because $\cR(g_\e)\ge \sigma-C_1\e^2$ in $M\setminus N_{2\e}$ for some constant $C_1>0$ independent of $\e$ by \eqref{e-smooth-2} {and $\sigma\leq 0$.}

Inside $N_{2\e}$, we have
\be\label{e-kappa-2}
\kappa(x)\le 2\delta \e^{-\frac 2q}.
\ee
 In $N_{2\e}\setminus N_\e$, as before we have
\be\label{e-kappa-3}
\kappa(x)\ge \sigma-C_1\e^2
\ee
 By \eqref{e-f},  inside $N_\e$
\bee
-C(1+r^{-2+\eta})\le \cR(g_\e)\le 12\b^nr^{-1} \e (\b^{-1}-1)+ C(1+r^{-2+\eta})
\eee
for some $C>0$ independent of $\e$. So
\be\label{e-kappa-4}
-C_3(1+r^{-2+\eta})\le \kappa(x)\le 2\delta\e^{-\frac 2q}.
\ee
for some $C_3>0$ independent of $\e, \delta$ by our choices of $q$ and the assumption on $\eta$.

From these we have:
\be\label{e-kappa-5}
\int_M \kappa dV_{g_\e}\le C_4\delta
\ee
for some $C_4>0$ independent of $\delta, \e$ and we have for $\frac n2<q'<q$,
\bee
\int_M |\kappa|^{q'} dV_{g_\e}\le C_3
\eee
for some constant $C_5$ independent of $\delta, \e$, which may depend on $q'$.
For such $q'$, we choose $\delta>0$ small enough so that we can apply  Theorem \ref{t-Trudinger} to  find $u>0$ such that $u$ is in $C^{2,\gamma}$ for some $\gamma>0$ and
\bee
-a\Delta_{g_\e} u+\kappa u=\lambda u^{P-1}
\eee
where
$$
\lambda=\int_M (|\nabla u|^2+\kappa u^2) dV_{g_\e}.
$$
Moreover, $C_5^{-1}\le u\le C_5$ for some $C_5>0$ independent of $\e$, and $\int_M u^P dV_{g_\e}=1$.
Then the metric
$$
 \ol g_\e= u^{\frac  4{n-2}}g_\e
$$
is a $C^{2,\gamma}$ metric for some $\gamma>0$ so that $V_{\ol g_\e}(M)=1$, and
\be\label{e-scalar-ge-3}
\begin{split}
\cR(\ol g_\e)=&u^{-\frac{n+2}{n-2}}\lf(-a\Delta u+\cR(  g_\e)u\ri)\\
=&u^{-\frac{n+2}{n-2}}\lf(-a\Delta u+ \kappa u+(\cR(  g_\e)- \kappa)u\ri)\\
\ge&\lambda
\end{split}
\ee
where we have used the fact that $\kappa\le \cR(g_\e)$.

Suppose $\b<1$ somewhere. Then there is an open set $\mathcal{O}$ in $N$ so that $\b\ge c>0$ for some $c>0$. Hence inside $\mathcal{O}_{\frac 59\e}\setminus \mathcal{O}_{\frac 49\e}$,
$$
C^{-1}\e^{-2}\le \cR(g_\e)\le C\e^{-2}
$$
for some constant $C$ independent of $\e$ provided it is small enough. We have
$$
\kappa(x)=2\delta \e^{-\frac 2q}
$$
in this set. In particular, as $\sigma\le 0$, we have
\bee
\int_M(\kappa-\sigma)_+ dV_{g_\e}\ge C_5 \e^{-\frac2q+2}
\eee
for some positive constant $C_5$ independent of $\e$, where $(\kappa-\sigma)_+$ is the positive part of $\kappa-\sigma$. On the other hand, by \eqref{e-kappa-1} and \eqref{e-kappa-3},
\bee
\int_{M\setminus N_\e} (\kappa-\sigma)_-dV_{g_\e}\le C_5\e^2,
\eee
and by \eqref{e-kappa-4},
\bee
\begin{split}
\int_{ N_\e} (\kappa-\sigma)_-dV_{g_\e}\le&C_6 \int_{N_\e}(1+r^{-2+\eta}) dV_{g_\e}\\
\le &C_7(\e^2+\e^\eta)
\end{split}
\eee
for some positive constants $C_6, C_7$ independent of $\e$. By \eqref{e-scalar-ge-3}
\bee
\begin{split}
\cR(\ol g_\e)-\sigma\ge&\lambda-\sigma\\
\ge&\int_M|\nabla u|^2+\kappa u^2 dV_{g_\e}-\sigma\int_M u^P dV_{g_\e}\\
\ge&\int_M  (\kappa-\sigma) u^2dV_{g_\e}\\
\ge&C_8\int_M (\kappa-\sigma)_+  dV_{g_\e}-C_9\int_M (\kappa-\sigma)_-dV_{g_\e}\\
\ge &C_{10} \e^{-\frac2q+2} -C_{11}(\e^2+\e^\eta)\\
>&0
\end{split}
\eee
provided $\e$ is small enough. Here we have used the facts that $\sigma\le 0$,
$$\int_M u^2dV_{g_\e}\le\int_Mu^PdV_{g_\e}=1,$$
  $C^{-1}\le u\le C$ for some positive constant $C$ independent of $\e$ and the fact that
$$
\eta> 2-\frac 2q.
$$
Hence $\ol g_\e$ is a $C^{2,\gamma}$ metric on $M$ with unit volume and with scalar curvature larger than $\sigma$. By Lemma \ref{l-Lebrun}, this is impossible. Hence we conclude that $\b=1$ everywhere and hence $g$ is $C^{2,\gamma}$ on $M$ for some $\gamma>0$. The last assertion follows from \cite[p.126-127]{Schoen1989}.
\end{proof}

\section{Point singularity}\label{s-point}

In this section, we want to prove Theorem \ref{t-point-removable} and Proposition \ref{p-point}.\vskip.1cm

Let $M^3$ be a compact manifold without boundary so that the Yamabe invariant $\sigma=\sigma(M)$ of $M$ is negative. To be precise, let $\mathfrak{h}$ be a smooth background metric. Throughout this section, we  assume
  $g$ is a metric on $M$ so that:
 \begin{enumerate}
    \item [(i)] $g\in C^\infty(M\setminus A)\cap L^\infty(M)$ for some finite set $A$ so that
$$
\Lambda^{-1}\mathfrak{h}\le g\le \Lambda\mathfrak{ h}
$$
for some $\Lambda>0$;
\item[(ii)] $V_g(M)=1$.   
  \end{enumerate}
 \vskip .2cm
 For simplicity, we assume $A=\{p\}$, just one point.

 We follow the steps in \cite[\S6]{LiMantoulidis2019}. The first step is to 'blowing up' the singularity. By Lemma \ref{l-Green}, we have:
 \begin{lma}\label{l-green} For any $\eta>0$,
there is a positive smooth function $G$ on $M\setminus\{p\}$ such that
$$
(-8\Delta_g+\eta)G=0,
$$
and $G(x)\sim d_\mathfrak{h}(p,x)^{-1}$ near $p$.
\end{lma}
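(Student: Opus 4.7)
The plan is to construct $G$ as the Green's function of $L = -8\Delta_g + \eta$ with pole at $p$, via a limit of regular problems. The key structural facts are that $g$ is uniformly elliptic with respect to the smooth background $\mathfrak{h}$ by the $L^\infty$ hypothesis, so in local charts $L$ has bounded measurable coefficients, and that $\eta > 0$ makes the bilinear form
\[
B(u,v) = \int_M \bigl(8\langle \nabla u,\nabla v\rangle_g + \eta\, uv\bigr)\,dV_g
\]
coercive and continuous on $W^{1,2}(M,\mathfrak{h})$.

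First, I would choose nonnegative $\rho_k \in C^\infty(M)$ with support in $B_{\mathfrak{h}}(p,1/k)$ and $\int_M \rho_k\,dV_g = 1$. Lax--Milgram gives a unique weak solution $G_k \in W^{1,2}(M)$ of $L G_k = \rho_k$, and the weak maximum principle (using $\eta>0$ and $\rho_k\ge 0$) forces $G_k > 0$. Since $g$ is smooth off $\{p\}$, interior Schauder estimates place $G_k$ in $C^\infty(M\setminus\mathrm{supp}(\rho_k))$, and De~Giorgi--Nash--Moser yields uniform $C^{2,\alpha}_{\mathrm{loc}}$ estimates on $M\setminus\{p\}$. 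Extracting a subsequence, $G_k \to G$ in $C^{2,\alpha}_{\mathrm{loc}}(M\setminus\{p\})$, so $LG = 0$ classically on $M\setminus\{p\}$, and $G>0$ by the strong maximum principle.

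The crux is the asymptotics at $p$, where $g$ is only $L^\infty$. In a normal chart for $\mathfrak{h}$ around $p$, the operator $L$ acts on $u$ as $-\partial_i(a^{ij}\partial_j u) + \eta u$ with uniformly elliptic, bounded measurable $a^{ij}$, and the classical Littman--Stampacchia--Weinberger bounds for the Green's function of such an operator give
\[
c\, d_{\mathfrak{h}}(p,x)^{-1} \le G_k(x) \le C\, d_{\mathfrak{h}}(p,x)^{-1}
\]
on a fixed punctured ball around $p$, uniformly in $k$ (the $L^1$-normalization of $\rho_k$ fixing the scale). Passing to the limit preserves this two-sided bound and yields the stated asymptotics. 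The main obstacle is making the singular bound uniform in $k$ with constants that do not depend on the lack of regularity of $g$ at $p$; a self-contained alternative is to compare $G_k$ on the annulus $\{1/k \le d_{\mathfrak{h}}(p,\cdot) \le r_0\}$ with explicit scalar multiples of $d_{\mathfrak{h}}(p,\cdot)^{-1}$ as super/sub-solutions, using $\Lambda^{-1}\mathfrak{h} \le g \le \Lambda\mathfrak{h}$ to close the maximum principle computation, and to invoke Moser's weak Harnack inequality on spherical shells for the pointwise lower bound. This is essentially the content of the appendix result Lemma~\ref{l-Green} which the statement invokes.
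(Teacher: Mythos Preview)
Your approach is correct but differs from the paper's appendix proof of Lemma~\ref{l-Green}. You construct the global Green's function directly by mollifying the Dirac mass, solving $LG_k=\rho_k$ on all of $M$ via Lax--Milgram (using coercivity from $\eta>0$), and passing to the limit with Littman--Stampacchia--Weinberger supplying the uniform two-sided $d_{\mathfrak h}(p,\cdot)^{-1}$ bounds near $p$. The paper instead first invokes LSW only for the simpler operator $\Delta_g$ on a small neighbourhood $U$ of $p$ to obtain a local harmonic function $v$ with the right singularity, then solves a Dirichlet problem $\Delta_g w-\kappa w=-\kappa v$ on $U$ so that $v-w$ satisfies the full equation near $p$ with $w$ bounded, extends $\phi:=v-w$ smoothly to $M\setminus\{p\}$, and finally finds a bounded global corrector $f$ solving $\Delta_g f-\kappa f=\Delta_g\phi-\kappa\phi$ by an exhaustion argument. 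Your route is more direct and cleaner when $\eta>0$ is a positive constant (so the bilinear form is coercive on the closed manifold and the $L^1$ bound $\int_M G_k=1/\eta$ is immediate); the paper's decomposition into ``local singular part plus bounded global correction'' is designed to handle the more general $\kappa$ of Lemma~\ref{l-Green}, which is only assumed positive near the singular set and may vanish elsewhere, where global coercivity fails.
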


Assume $U$ is a local coordinate neighbourhood of $p$ given by $y\in B_1(0)$ so that $p$ corresponds to the origin, where $B_1(0)=\{y\in \R^3|\ |y|<1\}$.
Let $y=y(x)=x/|x|^2$ which maps $\R^3\setminus B_1(0) $ to $B_1(0)\setminus\{0\}$ so that $\R^3\setminus B_1(0)$ forms a coordinate  chart for $U\setminus\{p\}$ with coordinates $\{x^1,x^2,x^3\}$.

\begin{lma}\label{l-conformal} Let $\eta>0$ and $G$ be as in the previous lemma. For any  $\e>0$, let
\bee
g_\e =(1+\e G)^4g
\eee
which is a complete metric defined on $M\setminus \{p\}$.
\begin{enumerate}
  \item [(i)] The scalar curvature of $g_\e$ satisfies:
  $$
  \cR(g_\e)=(1+\e G)^{-5}\lf((\cR(g)-\eta)(1+\e G)+\eta\ri).
  $$
  \item[(ii)] In $U\setminus\{p\}$ with coordinates in $x\in \R^3\setminus B_1(0)$,
  $$
  C^{-1}|x|^{-4}\delta\le g\le C|x|^{-4}\delta
  $$
  for some positive constant $C$, where $\delta$ is the Euclidean metric.

  \item [(iii)] In $U\setminus\{p\}$ with coordinates in $x\in \R^3\setminus B_1(0)$,
  $$
  C_1\lf(|x|^{-1}+C_1\e\ri)^4\delta\le g_\e\le   C_2\lf(|x|^{-1}+C_2 \e\ri)^4\delta
  $$
  for some positive constants $C_1, C_2$ independent of $\e$. In particular, for $|x|\ge \e^{-1}$,
  $$
  C_3^{-1}\e^4\delta \le g_\e\le C_3\e ^4 \delta.
  $$
\end{enumerate}

\end{lma}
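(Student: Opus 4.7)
The plan is to handle the three parts in sequence; each is a direct computation that uses only the conformal transformation law for scalar curvature, the defining equation for $G$, and the fact that the inversion $y = x/|x|^2$ is a conformal map of Euclidean space with conformal factor $|x|^{-4}$. For part (i) I would apply the standard formula
$$\cR(u^{4/(n-2)}g) = u^{-(n+2)/(n-2)}\bigl(-a\Delta_g u + \cR(g)u\bigr), \qquad a = \tfrac{4(n-1)}{n-2},$$
with $n = 3$, $a = 8$, and $u = 1 + \e G$. Substituting $-8\Delta_g G = -\eta G$ from $(-8\Delta_g + \eta)G = 0$ and regrouping gives $(1+\e G)^{-5}\bigl((\cR(g) - \eta)(1+\e G) + \eta\bigr)$, as claimed.

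For part (ii), on the coordinate chart $U$ the assumption $\Lambda^{-1}\mathfrak{h} \le g \le \Lambda\mathfrak{h}$ together with smoothness and boundedness of $\mathfrak{h}$ in the $y$-coordinates shows that $g$ is comparable to the Euclidean metric $\delta_y = \sum_k (dy^k)^2$. A direct calculation, differentiating $y^i = x^i/|x|^2$, yields the well-known identity $\delta_y = |x|^{-4}\delta$ on $\R^3 \setminus B_1(0)$, so the two-sided inequality $C^{-1}|x|^{-4}\delta \le g \le C|x|^{-4}\delta$ follows at once.

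For part (iii), Lemma \ref{l-green} gives $G(x) \sim d_\mathfrak{h}(p, x)^{-1}$ near $p$, and in the inverted coordinates $d_\mathfrak{h}(p, \cdot) \sim |y| = |x|^{-1}$; hence $G(x)$ is pinched between uniform constants times $|x|$. For $|x| > 1$, the factor $1 + \e G$ is therefore pinched between constants times $|x|\bigl(|x|^{-1} + \e\bigr)$, and raising to the fourth power and multiplying by the estimate from (ii) causes the $|x|^{\pm 4}$ factors to cancel, producing the two-sided bound of (iii) with different constants absorbed into the $C_i$. The final statement for $|x| \ge \e^{-1}$ follows because then $|x|^{-1} \le \e$, so $|x|^{-1} + C\e$ is uniformly comparable to $\e$.

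None of these steps presents a serious obstacle---the lemma is essentially bookkeeping---but care is required in (iii) to track the $\e$-independent comparison constants separately from the affine factors in $\e$ and $|x|$, since the subsequent blow-up analysis at $p$ in the proof of Theorem \ref{t-point-removable} needs the constants to be uniform as $\e \to 0$.
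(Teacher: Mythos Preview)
Your proposal is correct and follows essentially the same approach as the paper: in (i) the conformal scalar curvature formula with $n=3$, $a=8$ together with $-8\Delta_g G=-\eta G$; in (ii) the comparison $g\sim\mathfrak h\sim\delta_y$ combined with the inversion identity $\delta_y=|x|^{-4}\delta$ (the paper spells out the Jacobian computation explicitly, you cite it as well known); and in (iii) the asymptotic $G(x)\sim|x|$ combined with (ii). There is no substantive difference.
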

\begin{proof}
(i)
\bee
\begin{split}
\cR(g_\e)=&(1+\e G)^{-5}\lf[-8\Delta_g+\cR(g)\ri](1+\e G)\\
=&(1+\e G)^{-5}\lf(- \eta\e G+\cR(g)(1+\e G)\ri)\\
=&(1+\e G)^{-5}\lf( (\cR(g)-\eta)(1+\e G)+\eta\ri).
\end{split}
\eee

(ii) We may assume that  in $B_1(0)$
 $$
\mathfrak{ h}_{\a\b}\sim C\delta_{\a\b}
 $$
where $\mathfrak{h}_{\a\b}=\mathfrak{h}(\p_{y^\a},\p_{y^\b})$. Hence
\bee
\begin{split}
g_{ij}a^ia^j=&a^ia^j\frac{\p y^\a}{\p x^i}\frac{\p y^\b}{\p x^j}h_{\a\b}\\
=&h\left(a^i\frac{\p y^\a}{\p x^i}\p_{y^\a},a^j\frac{\p y^\b}{\p x^j}\p_{y^\b}\right)\\
\sim& a^ia^j\frac{\p y^\a}{\p x^i}\frac{\p y^\b}{\p x^j}\delta_{\a\b}\\
=&\sum_{i,j,\a}a^ia^j\frac{\p y^\a}{\p x^i}\frac{\p y^\a}{\p x^j}\\
=&\sum_{i,j,\a}a^ia^j\lf(\frac{\delta_{\a i}}{|x|^2}-2\frac{x^\a x^i}{|x|^4}\ri)\lf(\frac{\delta_{\a j}}{|x|^2}-2\frac{x^\a x^j}{|x|^4}\ri)\\
=&\frac{|a|^2}{|x|^4},
\end{split}
\eee
Hence (ii) is true.

(iii) For $y\in B_1(0)$, $G(y)\sim |y|^{-1}$. Hence in $x\in \R^3\setminus B_1(0)$,
$G(x)\sim |x|$. The results follow from (ii).

\end{proof}

The next step is to find a stable minimal sphere near infinity with respect to $g_\e$. In case the scalar curvature is positive, one can find a area minimizing set containing $B_1(0)$. Then any component is a sphere by the stability condition because the scalar curvature is positive. In our case, the scalar curvature may be negative, and we will use the result of Meeks-Yau \cite{MeeksYau1980} instead. Moreover, we need to control the volume inside the constructed minimal sphere.  We proceed as follows.\vskip .1cm

Let $0\le \phi\le 1$ be a non-increasing smooth function on $[0,\infty)$ so that $\phi=1$ on $[0,\frac32]$ and $\phi=0$ outside $[0,2]$.

\begin{lma}\label{l-convex} Let $\e>0$ and let $g_\e =(1+\e G)^4g$ be as in the previous lemma. Let  $R>\e^{-1}$ and
$$
\wh g_\e(x)=\phi(|x|/R)g_\e(x)+(1-\phi(|x|/R))s(x)
$$
for $x\in \R^3\setminus B_1(0)$ where $s(x)=(|x|^{-1}+\e)^4\delta$. Then $\wh g_\e$ can be extended to a smooth metric on $M\setminus\{p\}$ so that $\wh g_\e=g_\e$ outside $\R^3\setminus B_{R}(0).$ Moreover, $\p B_{2R}(0)$ is strictly convex with respect to $\wh g_\e$.
\end{lma}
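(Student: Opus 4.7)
The plan is to split the proof into three short steps: first verify that $\wh g_\e$ is a well-defined smooth Riemannian metric admitting the claimed extension, then reduce the convexity question at $|x|=2R$ to a computation in the model metric $s$, and finally carry out a one-line conformal calculation.

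For the extension step, both $g_\e$ and $s$ are smooth positive-definite metrics on $\R^3\setminus B_1(0)$, so the convex combination $\wh g_\e=\phi(|x|/R)g_\e+(1-\phi(|x|/R))s$ with weights in $[0,1]$ is again a smooth Riemannian metric on the chart. Because $\phi(|x|/R)=1$ for $|x|\le 3R/2$, the metric $\wh g_\e$ agrees with $g_\e$ on $B_{3R/2}(0)\setminus B_1(0)$, in particular on a neighborhood of the inner boundary of the chart. Hence $\wh g_\e$ extends smoothly to all of $M\setminus\{p\}$ by declaring it equal to $g_\e$ off the chart, producing a smooth metric that coincides with $g_\e$ on $B_R(0)$.

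The key observation for the convexity claim is that $\phi\equiv 0$ on $[2,\infty)$, so $\phi(2)=\phi'(2)=0$. Writing $\wh g_\e = s + \phi(|x|/R)(g_\e - s)$ and differentiating radially at $|x|=2R$, the contributions from $\phi$ and $\phi'$ both vanish, so $\wh g_\e$ and $\p_{|x|}\wh g_\e$ agree with $s$ and $\p_{|x|}s$ respectively on $\p B_{2R}(0)$. Since the second fundamental form of $\p B_{2R}(0)$ depends only on the metric value and its first radial derivative along the sphere, it suffices to verify strict convexity of $\p B_{2R}(0)$ in the model metric $s$ alone.

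For this last step I would write $s=u^2\delta$ with $u=w^2$, $w=|x|^{-1}+\e$, and apply the standard conformal transformation
$$\wt A(X,Y)=u\, A^\delta(X,Y)+\nu(u)\,\delta(X,Y)$$
with outward Euclidean unit normal $\nu=x/|x|$, where $A^\delta=\delta|_{T\Sigma}/r_0$ and $\nu(u)=2w\nu(w)=-2w/r_0^2$ at $|x|=r_0$. This gives $\wt A=\frac{w}{r_0^2}(wr_0-2)\,\delta|_{T\Sigma}$; at $r_0=2R$ the factor $wr_0-2$ equals $2R\e-1$, which by the hypothesis $R>\e^{-1}$ is strictly greater than $1$, so $\wt A$ is strictly positive definite. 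The only genuinely subtle point is the reduction paragraph: without the built-in flatness $\phi(2)=\phi'(2)=0$, the SFF at $|x|=2R$ would pick up derivatives of $g_\e$ in the transition region. Once that reduction is noted, the rest is routine conformal geometry, and one sees that the assumption $R>\e^{-1}$ is essentially sharp for this argument.
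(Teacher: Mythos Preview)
Your proposal is correct and follows essentially the same approach as the paper. The paper simply asserts that near $|x|=2R$ one has $\wh g_\e=s$ and then computes the mean curvature of $\p B_{2R}(0)$ in $s$, invoking umbilicity to upgrade to strict convexity; your version is slightly more explicit about \emph{why} one may reduce to $s$ (the vanishing of $\phi(2)$ and $\phi'(2)$), and you compute the full second fundamental form directly rather than the mean curvature plus umbilicity, but these are cosmetic differences in the same argument.
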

\begin{proof} The first assertion follows from the fact that $\phi(|x|/R)=1$ for $|x|\le R$.
Near $|x|=2R$, $\wh g_\e= (|x|^{-1}+\e)^4\delta$. Hence
The mean curvature of $|x|=2R$ is given by
\bee
\begin{split}
H=&(|2R|^{-1}+\e  )^{-2}\left(\frac 1R+\frac4{(2R)^{-1}+\e}(-\frac1{4R^2})\ri)\\
=&(|2R|^{-1}+\e  )^{-2}R^{-1}\lf(1-\frac{1}{\frac12+\e R}\ri)\\
\ge&(|2R|^{-1}+\e  )^{-2}R^{-1}\lf(1-\frac{1}{\frac12+1}\ri)\\
>&0
\end{split}
\eee
  because $R\e>1$. Since   $|x|=2R$ is umbilical, hence $|x|=2R$ is strictly convex in $\wh g_\e$.
\end{proof}

\begin{lma}\label{l-msphere} For any $\e>0$, there is an embedded minimal sphere $S$ with respect to $g_\e$ such that the following are true.
\begin{enumerate}
  \item [(i)] There is $k>0$ independent of $\e$ such that  for any $r>1$, $S\subset  B_{k\e^{-1}}(0)\setminus B_r(0)$ provided $\e$ is small enough.
  \item[(ii)] $S$ is stable.
  \item[(iii)] $S$ bounds $B_1(0)$.
\item[(iv)]
$$
|V_{g_\e}(B_{k\e^{-1}}(0)\setminus B_1(0))-V_{g}(B_{k\e^{-1}}(0)\setminus B_1(0))|\le C\e|\log\e|
$$
for some $C>0$ independent of $\e$.
\end{enumerate}

\end{lma}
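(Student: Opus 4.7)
The plan is to construct $S$ as an area minimiser for the modified metric $\wh g_\e$ of Lemma \ref{l-convex}, and then verify that it lies in the region where $\wh g_\e=g_\e$ so that it is minimal for $g_\e$ itself. Choose $R=2\e^{-1}$. Then by Lemma \ref{l-convex}, the sphere $\p B_{2R}(0)$ is strictly mean convex with respect to $\wh g_\e$, while $\wh g_\e=g_\e$ on $B_R(0)$. I would then minimise area in the isotopy class of $\p B_1(0)$ over smooth embedded closed surfaces in $\ol{B_{2R}(0)}\setminus B_1(0)$. The outer strict convexity prevents minimisers from touching $\p B_{2R}(0)$; for the inner side, since $g$ is smooth on $M\setminus\{p\}$, one obtains an inner barrier by slightly enlarging $B_1(0)$ (or equivalently by minimising in the homology class of $\p B_1$). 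The theorem of Meeks--Yau \cite{MeeksYau1980} then produces an embedded, stable, area minimiser consisting of spheres isotopic to $\p B_1(0)$; take $S$ to be the component bounding $B_1(0)$. Stability (ii) follows from area minimisation, and (iii) holds by construction.

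For the localisation (i): testing against $\p B_{\e^{-1}}(0)$ and using Lemma \ref{l-conformal}(iii) (on $|x|=\e^{-1}$ the conformal factor is $O(\e^4)$ while the Euclidean area is $O(\e^{-2})$) gives $\mathrm{Area}_{g_\e}(\p B_{\e^{-1}})=O(\e^2)$, hence $\mathrm{Area}_{\wh g_\e}(S)\le C\e^2$. To show $S\cap B_r(0)=\emptyset$ for any fixed $r>1$ and $\e$ small, argue by contradiction: if $x_0\in S\cap B_r(0)$, then in the fixed compact set $B_r(0)$ the conformal factor $(1+\e G)^4$ is bounded above and below by constants depending only on $r$, so $g_\e$ is uniformly equivalent to $\mathfrak{h}$; the monotonicity formula for minimal surfaces in a smooth background then yields $\mathrm{Area}_{g_\e}(S)\ge c(r)>0$ independent of $\e$, contradicting the $O(\e^2)$ upper bound. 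For the outer containment $S\subset B_{k\e^{-1}}(0)$, a mean-curvature calculation parallel to Lemma \ref{l-convex} shows that $\p B_\rho(0)$ is strictly $g_\e$-mean convex once $\rho\ge c\e^{-1}$ for a universal $c>0$, so by the maximum principle $S\subset B_{c\e^{-1}}$. Taking $k=\max(c,2)$ and noting $c\e^{-1}\le R$ also places $S$ inside $B_R$, hence in the region where $\wh g_\e=g_\e$, so $S$ is truly $g_\e$-minimal.

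For the volume estimate (iv), from $dV_{g_\e}=(1+\e G)^6\, dV_g$ one has
\bee
V_{g_\e}(B_{k\e^{-1}}\setminus B_1)-V_g(B_{k\e^{-1}}\setminus B_1)=\int_{B_{k/\e}\setminus B_1}\bigl[(1+\e G)^6-1\bigr]\,dV_g.
\eee
Using $G(x)\le C|x|$ and $dV_g\le C|x|^{-6}\,dx$ from Lemma \ref{l-conformal}, a binomial expansion reduces this to a finite sum of the form $\sum_{j=1}^6 C_j\e^j\int_1^{k/\e} r^{j-4}\,dr$. The $j=1$ term gives $O(\e)$, the $j=3$ term contributes $O(\e^3|\log\e|)$, and all other terms are $O(\e^2)$ or better, giving the desired $C\e|\log\e|$ bound.

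The main obstacle is the internal consistency of the construction: we minimise using the auxiliary metric $\wh g_\e$ (chosen so as to have a strictly convex outer barrier), but the conclusion must concern $g_\e$-minimality. This is resolved by the mean-curvature calculation above, which forces the minimiser into the region where $\wh g_\e$ and $g_\e$ coincide. A secondary technical point is the extraction of a sphere from the Meeks--Yau minimiser: since we minimise in the isotopy class of the sphere $\p B_1(0)$, their theorem produces embedded disjoint spheres, and the component bounding $B_1(0)$ is the desired $S$.
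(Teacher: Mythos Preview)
Your argument for the outer containment $S\subset B_{k\e^{-1}}(0)$ has a genuine gap. You claim that a mean-curvature computation ``parallel to Lemma~\ref{l-convex}'' shows $\p B_\rho(0)$ is $g_\e$-mean convex for $\rho\ge c\e^{-1}$. But Lemma~\ref{l-convex} computes mean curvature for the \emph{exact} rotationally symmetric model $s(x)=(|x|^{-1}+\e)^4\delta$, whereas for $g_\e$ you only have the $L^\infty$ equivalence $C_1(|x|^{-1}+C_1\e)^4\delta\le g_\e\le C_2(|x|^{-1}+C_2\e)^4\delta$ from Lemma~\ref{l-conformal}(iii). Mean curvature depends on first derivatives of the metric, and the hypotheses give no control on derivatives of $g$ (hence of $g_\e$) in the asymptotic $x$-chart --- the original metric $g$ is only assumed $L^\infty$ across $p$. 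So your barrier argument cannot be carried out. Correspondingly, fixing $R=2\e^{-1}$ in advance is premature: without the barrier, nothing forces $S$ into $B_R$ where $\wh g_\e=g_\e$.

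The paper handles this outer bound by a completely different and more delicate argument. One takes $R>k\e^{-1}$ with $k$ to be determined, and lets $\rho_0$ be the smallest radius with $S\subset B_{\rho_0}$. For $t>\e^{-1}$ one shows (by an explicit homotopy) that the cap $\a_t=S\setminus B_t(0)$ can be replaced by the spherical cap $\b_t=\p B_t\cap\Omega$, so area-minimality gives $|\a_t|_{\wh g_\e}\le|\b_t|_{\wh g_\e}$; combined with the uniform bound $|\b_t|_\delta\le C\e^{-2}$ and the isoperimetric inequality on Euclidean spheres this yields $|\Sigma_t|_{\wh g_\e}^2\ge C|\a_t|_{\wh g_\e}$ for $t\ge k_1\e^{-1}$. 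The co-area formula then turns this into an ODE-type inequality integrating to $\e(\rho_0-k_1\e^{-1})\le C$, i.e.\ $\rho_0\le k_2\e^{-1}$ with $k_2$ independent of $\e$ and of $R$. Only then does one know $S\subset B_R$ and hence that $S$ is $g_\e$-minimal.

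A secondary point: your minimisation is posed in the annulus $\ol{B_{2R}}\setminus B_1$, which would require an inner barrier at $\p B_1$; no such barrier is available. The paper minimises in the compact manifold $M_R=(M\setminus U)\cup(B_{2R}\setminus B_1)$ with single boundary $\p B_{2R}$, and invokes Meeks--Yau via the fact that $\p B_r$ is homotopically nontrivial in $M_R$ (else $M\cong\mathbb{S}^3$, contradicting $\sigma(M)<0$). Your later monotonicity argument for inner containment is fine, but the variational setup needs this correction. Your treatment of (iv) is correct and matches the paper.
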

\begin{proof} For any $\e>0$, let $k\ge 1$ to be determined later, which will be independent $\e$. Let $R>k\e^{-1}$. Consider the metric $\wh g_\e$ as in the previous lemma. Since $\p B_{2R}(0)$ is strictly convex with respect to $\wh g_\e$. Denote
$$M_R=(M\setminus U) \cup (B_{2R}(0)\setminus B_1(0)).
 $$
Roughly speaking, $M_R$ is the part in $M$ which is `inside' $\p B_{2R}(0)$.
 Then $M_R$ is a compact manifold with boundary $\p B_{2R}(0)$.

For $r>1$, $\p B_r(0)$ will not be homotopically trivial in $M_R$   otherwise $M$ will be simply connected, see \cite[Proposition 3.10]{Hatcher}. By Hamilton's program and Perelman's
work on the resolution of the Poincar\'e conjecture,   $M$ is homeomorphic to $\mathbb{S}^3$.  This implies   $\sigma(M)>0$, contradicting the assumption that $\sigma(M)<0$. Since $\p B_{2R}(0)$ is strictly convex with respect to $\wh g_\e$, by  \cite[Theorem 7]{MeeksYau1980}, there is a homotopically non-trivial conformal map $f:\mS^2\to M_R$ such that, either $f$ is an embedding or $f$ is a two-to-one covering map whose image is an embedded $\mathbb{RP}^2$. Moreover, $f$ minimizes area among all homotopically nontrivial maps from $\mS^2$ to $M_R$.

We first claim for any $R>r>1$, the surface must be inside $B_{2R}(0)\setminus B_r(0)$ provided $\e>0$ is small enough. For fixed $r$, $\wh g_\e$ will converge uniformly in $C^\infty$ norm to $g$ as $\e\to0$ in $M_R\setminus (B_{2R}(0)\setminus B_r(0))$.  By \cite[Lemma 1]{MeeksYau1980}, we conclude that there is a constant $C>0$ which is independent of $\e$ such that if the surface mentioned above meets $M_R\setminus (B_{2R}(0)\setminus B_r(0))$, then its area is larger than $C$. On the other hand, $\p B_{\e^{-1}}(0)$ is homotopically nontrivial, with area with respect to $\wh g_\e$ being bounded above by $C\e^2$ by Lemma \ref{l-conformal} and the definition of $\wh g_\e$. Hence the surface must be inside $B_{2R}(0)\setminus B_r(0)$ provided $\e>0$ is small enough which may depend on $r$.

We can rule out that the image of $f$ is an embedded $\mathbb{RP}^2$ because the surface is now sitting inside $B_{2R}(0)\setminus B_1(0)$ which is simply connected.

To conclude, $f:\mS^2\to M_R\setminus B_1(0)$ is an embedded two-sphere. Denote the embedded surface by $S$. By the properties of $f$, we know that $S$ is stable. Moreover, $S$ bounds $B_1(0)$ considered as a surface in $\R^3$ because $S$ is homotopically nontrivial.

For the time being, $S$ is a stable minimal surface with respect to $\wh g_\e$. Since inside $  B_R(0)$,  $\wh g_\e=g_\e$, we want to show that $S\subset B_R(0)$, provided $\e$ is small enough, and for suitable $k\ge 1$ which is independent of $\e$.

 Let $1<\rho_0<2R$ be the smallest $\rho $ so $S\subset B_\rho(0)$. We want to estimate $\rho_0$. We proceed  as in \cite{LiMantoulidis2019}.    Suppose $\rho_0>\e^{-1}$. Let $\Omega=M\setminus V$ where $V$ is the open set
 in $\R^3$ which is exterior of $S$.
 Then for any $t>\e^{-1}$ so that $\p B_t(0)$ meets $S$ transversally,  $\Omega\setminus B_t(0)$ is an open set with boundary $\a_t\cup\b_t$ where
 $$
 \a_t= \p\Omega\setminus B_t(0)=S\setminus B_t(0),\ \ \b_t= \p B_t(0)\cap\Omega.
 $$
$\a_t$ and $\b_t$ are surfaces with boundary $\Sigma_t=\p\Omega\cap \p B_t(0)$.
We claim that one can deform $\a_t$ smoothly outside $B_t(0)$ to   surfaces with $\wh g_\e$-area arbitrary close to the $\wh g_\e$-area of $\b_t$ while keeping   $S\cap B_t(0)$ fixed.

 Let $r$ be the Euclidean distance function from the origin in $\R^3$. Then $h=r\circ f$ is a smooth function on $\mathbb{S}^2$.   Each component of the set $D=\{h>t\} $ is a planar domain bounded by simple closed curves in $\mathbb{S}^2$. Consider the open set in $\R^3$ bounded by $f(\mathbb{S}^2)$ and the sphere $|x|=t$. The intersection $D'$ of this open set  with $|x|=t$ consists of planar domains. We want to prove that $f(D)$ can be deformed in $|x|>t$ to surfaces  with area arbitrarily close to the area of $\Omega'$ fixing the common boundary of $D'$ and $f(D)$. By considering each component of $\Omega$ separately, we may assume that $D$ is connected and is a planar domain bounded by $C_0, C_1,\dots, C_k$. The boundary of $D'$ consists of $f(C_0), f(C_1),\dots, f(C_k)$. We may assume that $C_0$ contains all other $C_i$, and $f(C_0)$ contains all other $f(C_i)$. Note that $D$ is connected does not imply that $D'$ is connected.
Let us first consider the case that $D'$ is also connected. Using the Kelvin transform with origin at a point outside $D'$, we may transform $|x|=t$ to a plane and $|x|>t$ to the half space bounded by this plane. Hence we may assume that $f$ maps $\{h>t\}$ to an open set in $\R^3_+=\{x_3>0\}$ and maps its boundary to the plane $\{x_3=0\}.$
Now, we have a diffeomorphism $F$ from $D$ to $D'$  which maps    $C_i$ to $f(C_i)$ so that $F=f $ on $C_i$. Define $\Phi(s,p)=(1-s)f(p)+sF(p)$ (as position vectors in $\R^3$) for $p\in D$. This gives the deformation we want.

Suppose $D'$ has several components. Then each component corresponding to some subdomain in $D$ and the image of $D$ are images of these subdomains connected by   cylinders. Since each cylinder can be deformed a surface with  arbitrarily small area, we may use the above technique to get the result.

By the minimizing property of $S$, we have
\be\label{e-replace-1}
|\a_t|_{\wh g_\e}\le |\b_t|_{\wh g_\e}.
\ee
On the other hand, using the fact that $\p B_r(0)$ is convex with respect $\delta$, we conclude that
\be\label{e-replace-2}
|\b_t|_\delta\le |\a_t|_\delta\le C\e^{-2}
\ee
because   $|\a_t|_{\wh g_\e}\le |S|_{\wh g_\e}\le C\e^2$ and     $$C\e^4\delta\le \wh g_\e\le C^{-1}\e^4\delta$$ on $|x|>\e^{-1}$ for some $C>0$.
Here and below $C$ will denote a positive  constant  independent of $\e$, $k$ and $R$ and it may vary from line to line.

By the isoperimetric inequality on Euclidean spheres, see \cite{Osserman,Chavel}, we have
\bee
\begin{split}
|\Sigma_t|^2_{\wh g_\e}\ge &\e^4|\Sigma_t|^2_\delta\\
\ge&\e^4 \cdot  |\b_t|_\delta t^{-2}(4\pi t^2-|\b_t|_\delta)\\
\ge&  \e^4 |\b_t|_\delta t^{-2}(4\pi t^2-C\e^{-2})\\
\ge& C\e^4|\b_t|_\delta \\
\ge&C|\b_t|_{\wh g_\e}\\
\ge &C |\a_t|_{\wh g_\e}\\
=&C|\p\Omega\setminus B_t(0)|_{\wh g_\e}
\end{split}
\eee
provided $t\ge k_1\e^{-1}$, where $k_1$ is independent of $\e$ and $R$.

 Let us use the co-area formula. Let $r(x)=|x|$. Then
$$
|\nabla r|_{\wt g_\e}\le C\e^{-2}|\nabla_0 r|_\delta=C\e^{-2}.
$$
for some $C$ independent of $\e, R$. Here $\nabla_0$ is the Euclidean gradient.
Then for $t>k_1\e^{-1}$,
\be\label{e-rho-est}
\begin{split}
|\Sigma_t|_{\wh g_\e}\ge& C|\p\Omega\setminus B_t(0)|_{\wh g_\e}^\frac12\\
=&C\lf(\int_t^{\rho_0}\int_{\Sigma_s}|\nabla^Tr|^{-1} dH_{\wh g_\e}^1 ds\ri)^\frac12\\
\ge &C\e \lf(\int_t^{\rho_0} |\Sigma_s|_{\wh g_\e} ds\ri)^\frac12.
\end{split}
\ee
In particular,
$$
\lf(\int_{k_1\e^{-1}}^{\rho_0} |\Sigma_s|_{\wh g_\e} ds\ri)^\frac12\le C\e^{-1}|\p\Omega\setminus B_t(0)|_{\wh g_\e}^\frac12\le C'
$$
for some $C'$ independent of $\e$. By the differential inequality \eqref{e-rho-est}, integrating from $k_1\e^{-1}$ to $\rho_0$
Hence for $t\ge k_1\e^{-1}$,
\bee
\e (\rho_0-k_1\e^{-1})\le 2\lf(\int_{k_1\e^{-1}}^{\rho_0} |\Sigma_s|_{\wh g_\e} ds\ri)^\frac12\le C.
\eee
This implies $\rho_0\le   k_2\e^{-1}$, for some $k_2>k_1$ independent of $\e$ provided $R>k_2\e^{-1}$. Hence for this choice of $R$, the minimal surface $S$ will satisfy the conditions in the lemma because inside $B_R(0)$, $\wh g_\e=g_\e$.

To prove (iv),
\bee
\begin{split}
 V_{g_\e}(B_{k\e^{-1}(0)}\setminus B_1(0))-V_g(B_{k\e^{-1}}(0)\setminus B_1(0))=& \int_{B_{k\e^{-1}}(0)\setminus B_1(0)} (\sqrt{g_\e}-\sqrt g )dx\\
 =& \int_{B_{k\e^{-1}}(0)\setminus B_1(0)} ((1+\e G)^6-1)\sqrt g dx\\
 \le &C\int_1^{k\e^{-1}} \sum_{i=1}^6(\e r)^i r^{-2} dr\\
 \le &C\e|\log \e|.
\end{split}
\eee
for some constant $C$ independent of $\e$.

\end{proof}

Using the result in \S\ref{s-eigenvalue} below and the method in \cite{MantoulidisSchoen2015}, see also \cite{CCM2018}, one can construct a metric on the upper hemisphere with scalar curvature bounded below by $\sigma$ so that the boundary is isometric to $g_\e|_S$ in the previous lemma and is minimal. However, one cannot obtain a good estimate for the volume. Hence we can only obtain Theorem \ref{t-point-removable}, which we now prove.

\begin{proof}[Proof of Theorem \ref{t-point-removable}].  By the result on removable point singularities by Smith-Yang \cite{SmithYang1992}, it is sufficient to prove that $g$ is Einstein away from the point singularities.
 For simplicity, we prove the case that $A$ consists of only one point $p$. The general case can be proved similarly. If $g$ is not Einstein away from $p$, then either $\cR(g)>\sigma$ somewhere in $M\setminus\{p\}$  or $\cR(g)=\sigma$   and $\mathring{\Ric}\not\equiv0$ there, where $\mathring{\Ric}$ is the traceless part of $\Ric$.

Suppose there is $x_0\neq p$ with $\cR(g)(x_0)>\sigma$. Then there exists $r_0>0$ so that $p\notin B_{x_0}(2r_0)$ and
\be\label{e-case1-1}
\cR(g)\ge \sigma+2a
\ee
in $B_{x_0}(2r_0)$ for some $a>0$. Let $\eta>0$ be a constant and let $G$ be a positive function so that
$$
(-8\Delta_g +\eta)G=0
$$
in $M\setminus \{p\}$ so that $G\sim d(\cdot,p)$ near $p$. Let $g_\e=(1+\e G)^4g$. By Lemma \ref{l-msphere} for $\e>0$ small enough, there is a stable minimal sphere $S_\e$ containing the point $p$ so that $S_\e\to p$. Moreover, $M\setminus S_\e$ consists of two components. Let $M_\e$ be the component which does not contain $p$, then by Lemma  \ref{l-msphere},
\be\label{e-case1-2}
V_{g_\e}(M_\e)\to 1
\ee
as $\e\to 0$.
We glue two copies of $M_\e$ along $S_\e$.
This is homeomorphic to $M\# M$ and the smooth structure is unique. By the construction of Miao \cite[Proposition 3.1]{Miao2002}, for any $\delta>0$, we can find a smooth metric $g_{\e;\delta}$ so that $ g_{\e;\delta}=g_\e$ outside a neighbourhood $U_\delta$ of $S_\e$ in the connected sum, $\cR(g_{\e;\delta})$ are uniformly bounded below independent of $\delta$, $  g_{\e;\delta}$ are uniformly close to $g_\e$ as $\delta\to0$, and $V_{g_\e}(U_\delta)\to 0$ as $\delta\to0$.
By Lemma \ref{l-Lebrun},
\be\label{e-case1-3}
\begin{split}
2|\sigma|^\frac 32=&|\sigma(M\#M)|^\frac32\\
\le&\int_{M\#M} \cR(g_{\e;\delta})_-^\frac32 dV_{g_{\e;\delta}}\\
\rightarrow &2\int_{M_\e} \cR(g_{\e})_-^\frac32 dV_{g_\e}
\end{split}
\ee
as $\delta\to0$.
Recall that
 $$
  \cR(g_\e)=(1+\e G)^{-5}\lf((\cR(g)-\eta)(1+\e G)+\eta\ri).
  $$
Then outside $B_{x_0}(2r_0)$, we have
$$
\cR(g_\e)\ge \sigma-\eta
$$
because $\sigma<0$ and $1+\e G>1$. Inside $B_{x_0}(2r_0)$,
$$
\cR(g_\e)\ge (1+\e G)^{-5}\lf((\sigma+2a-\eta)(1+\e G)+\eta\ri)\ge \sigma+a
$$
provided $\eta<a$ and $\sigma+a<0$. We may assume $a$ is small enough and we can choose $\eta$ small enough so these are true. Hence by \eqref{e-case1-3}
\bee
\begin{split}
|\sigma|^\frac 32\le &\int_{M_\e\setminus B_{x_0}(2r_0)}|\sigma-\eta|^\frac32 dV_{g_\e}
+ \int_{B_{x_0}(2r_0)}|\sigma+a|^\frac32 dV_{g_\e}\\
=&|\sigma-\eta|^\frac32V_{g_\e}( M_\e\setminus B_{x_0}(2r_0))+|\sigma+a|^\frac32V_{g_\e}( M_\e\setminus B_{x_0}(2r_0))\\
=&|\sigma-\eta|^\frac32V_{g_\e}( M_\e)+(|\sigma+a|^\frac32-|\sigma-\eta|^\frac32) V_{g_\e}(B_{x_0}(2r_0))\\
\to& |\sigma-\eta|^\frac32+(|\sigma+a|^\frac32-|\sigma-\eta|^\frac32)V_{g}(B_{x_0}(2r_0)).
\end{split}
\eee
as $\e\to0$, for fixed $\eta$ by Lemma \ref{l-msphere}. Now let $\eta\to0$, we have
\bee
\begin{split}
|\sigma|^\frac 32\le & |\sigma|^\frac32+(|\sigma+a|^\frac32-|\sigma|^\frac32)V_{g}(B_{x_0}(2r_0))\\
<&|\sigma|^\frac32
\end{split}
\eee
because $\sigma<0$. This is a contradiction. Hence we must have $\cR(g)=\sigma$ on $M\setminus \{p\}$.\bigskip

Suppose $\cR(g)=\sigma$ in $M\setminus\{p\}$ and $\mathring{\Ric}\not\equiv0$ in $M\setminus\{p\}$,  then there is $x_1$,  $r_1>0$, $b>0$ so that $p\notin B_{x_1}(2r_1)$ and
\be\label{e-case2-1}
|\mathring{\Ric}|_g\ge b
\ee
in $B_{x_1}(2r_1)$. We may also assume that $r(x)=d_g(x_1,x)$ is smooth in $B_{x_1}(2r_1)$.
Let $\phi$   be a smooth  function on $[0,\infty)$ with $\phi\ge 0$, $\phi=1$ on $[0,1]$ and $\phi=0$ on $[2,\infty)$ and such that $|\phi'|\le C_1$, with $C_1$ being an absolute constant. Let $$h(x)=\phi\lf(\frac{r(x)} {r_1}\ri)\mathring{\Ric}(g)(x).$$

As in \cite{ShiTam2018}, for  $|\tau|>0$ sufficiently small, let $\wt g_{\tau}=g+\tau h$, which is smooth in $M\setminus \{p\}$ and is equal to $g$ outside $B_{x_1}(2r_1)$.  Since $\tr_g(h)=0$, we have
\be\label{e-tau}
\left\{
  \begin{array}{ll}
    dV_{\wt g_\tau}=dV_g+O(\tau^2)\\
V_{\wt g_\tau}(M)=V_g(M)+O(\tau^2)=1+O(\tau^2)\\
\Lambda^{-1}g\le \wt g_\tau \le \Lambda g
  \end{array}
\right.
\ee
for some $\Lambda>0$ independent of $\tau$.

Moreover,
\bee
\begin{split}
\cR(\wt g_\tau)=&\cR(g)+\tau\div_g(\div_gh)-\tau\la h,\Ric\ra_g+O(\tau^2)\\
=&\sigma+\tau\la \nabla^2\phi,\mathring{\Ric}\ra_g -\tau \phi |\mathring {\Ric}|^2_g+O(\tau^2)\\
\end{split}
\eee
Fix $\tau<0$, we want to  apply Lemma \ref{l-msphere} to the metric $\wh g_\tau=(V_{\wt g_\tau}(M))^{-\frac23} \wt g_\tau$  and use argument similar to the previous case.
Outside $B_{x_1}(2r_1)$,
\bee
\cR(\wh  g_\tau)\ge \sigma-C_2\tau^2
\eee
for some $C_2$ independent of $\tau$ because $V_{\wt g_\tau}(M)=1+O(\tau^2)$. Inside $B_{x_1}(2r_1)$
\bee
\cR(\wh g_\tau)=\sigma+\tau\la \nabla^2\phi,\mathring{\Ric}\ra_g -\tau \phi |\mathring {\Ric}|^2_g+O(\tau^2)
\eee

Apply Lemma \ref{l-msphere} and argue as before, we have
\be\label{e-case2-2}
\begin{split}
|\sigma|^\frac32&\le |\sigma|^{\frac32}V_g( M\setminus B_{x_1}(2r_1))\\
&\quad +\int_{B_{x_1}(2r_1)}\lf((\sigma+\tau\la \nabla^2\phi,\mathring{\Ric}\ra_g -\tau \phi |\mathring {\Ric}|^2_g)_-\ri)^\frac 32 dV_g+O(\tau^2)
\end{split}
\ee

If $|\tau|$ is small enough, then
\bee
\begin{split}
&\quad \lf((\sigma+\tau\la \nabla^2\phi,\mathring{\Ric}\ra_g -\tau \phi |\mathring {\Ric}|^2_g)_-\ri)^\frac 32\\
&= (-\sigma-\tau\la \nabla^2\phi,\mathring{\Ric}\ra_g +\tau \phi |\mathring {\Ric}|^2_g )^\frac 32\\
&=|\sigma|^\frac32\left[1+\frac32\cdot\frac\tau\sigma \tau\la \nabla^2\phi,\mathring{\Ric}\ra_g -\frac32\frac\tau \sigma \phi |\mathring {\Ric}|^2_g\right]+O(\tau^2).
\end{split}
\eee
Hence
\bee
\begin{split}
&\int_{B_{x_1}(2r_1)}\lf((\sigma+\tau\la \nabla^2\phi,\mathring{\Ric}\ra_g -\tau \phi |\mathring {\Ric}|^2_g)_-\ri)^\frac 32 dV_g\\
=&|\sigma|^\frac32\int_{B_{x_1}(2r_1)}\left(1+\frac32\cdot\frac\tau\sigma  \la \nabla^2\phi,\mathring{\Ric}\ra_g -\frac32\frac \tau\sigma\phi   |\mathring {\Ric}|^2_g \right)dV_g+O(\tau^2).
\end{split}
\eee
Combine these with \eqref{e-case2-2}, we have
\bee
\begin{split}
|\sigma|^\frac32\le &|\sigma|^{\frac32}
+|\sigma|^\frac32\int_{B_{x_1}(2r_1)}\left( \frac32\cdot\frac\tau\sigma  \la \nabla^2\phi,\mathring{\Ric}\ra_g -\frac32\frac \tau\sigma \phi  |\mathring {\Ric}|^2_g\right) dV_g+O(\tau^2)\\
=&|\sigma|^{\frac32}-\frac32\frac{|\tau|}{|\sigma|}  \int_{B_{x_1}(2r_1)}(   \phi|\mathring {\Ric}|^2_g) dV_g+O(\tau^2)\\
\le &|\sigma|^{\frac32}-\frac32\frac{|\tau|}{|\sigma|}\cdot b V_g( B_{x_1}(r_1))\\
<&|\sigma|^\frac32.
\end{split}
\eee
if we  choose $|\tau|$ small enough. Hence we have a contradiction.
\end{proof}

Next we will prove Proposition \ref{p-point}.

\begin{proof}[Proof of Proposition \ref{p-point}] We may assume that the $\sigma(M)$   is negative, otherwise  the result is obvious. For simplicity, we assume $A$ consists of only one point $p$. Let $U$ be an open set in $M$ containing $p$ so that $\cR(g)>0$ in $U\setminus\{p\}$.  Let $0\le \phi\le 1$ be a smooth cut-off function near $p$ so that $\phi=0$ outside $U$ and $\phi=1$ on an open set $  V\Subset U$ with $p\in V$. Let $\kappa(x)=\rho(\frac14\cR(g)(x)\cdot \phi(x), 1)$ where $\rho$ is the function defined in \eqref{e-rho-1}. Then $\kappa$ is  a smooth function in $M\setminus\{p\}$  so that $\kappa=0$ outside $U$, $\kappa$ is bounded, and $\frac12\cR(g)>\kappa>0$ in $  V\setminus\{p\}$. In particular, there is $x_0$ and $r_0>0$ so that $B_{x_0}(r_0)\Subset V\setminus\{p\}$ and $\cR(g)\ge a$ for some $a>0$ in $B_{x_0}(r_0)$. By Lemma \ref{l-Green}, we can find a positive smooth function $G$ so that
$$
-8\Delta_gG+\kappa G=0
$$
in $M\setminus\{p\}$, $G$ is bounded below by a positive constant and $G(x)\sim \left( r(p,x)\right)^{-1}$ near $p$. As in the proof of previous theorem, consider the metric
$$
g_\e=(1+\e G)^4g
$$
Then the scalar curvature of $g_\e$ is
$$
\cR(g_\e)=(1+\e G)^4\lf((\cR(g)-\kappa)(1+\e G)+\kappa\ri).
$$
In particular, the $\cR(g_\e)>0$ in $V\setminus\{p\}$. Then as in \cite{LiMantoulidis2019} or using Lemma \ref{l-msphere}, we can find a minimal surface $S_\e$ which is a sphere and bounds a  three ball $W$ containing the point $p$. Moreover, the first eigenvalue of $-\Delta_S +K$ is positive because for $\e$ small enough $S_\e$ will be inside  $V\setminus\{p\}$ and $\cR(g_\e)>0$ there. As in \cite{LiMantoulidis2019}, using the method in \cite{MantoulidisSchoen2015}, we can glue the part $M\setminus W$ along $S_\e$ with a hemisphere so that the scalar curvature at in the hemisphere is nonnegative. Using the method \cite{Miao2002} as in the proof of Theorem \ref{t-point-removable} and let $\e\to \infty$, we have
\bee
|\sigma(M)|^\frac23\le |\sigma|^\frac32V_g(M\setminus B_{x_0}(r_0))<|\sigma|^\frac23
\eee
by Lemma \ref{l-Lebrun}. Hence $0>\sigma(M)>\sigma$. From this the result follows.
\end{proof}

\section{On the eigenvalues of $-\Delta+K$}\label{s-eigenvalue}

In \cite{LiMantoulidis2019} and the proof in Proposition \ref{p-point} the following fact in \cite{MantoulidisSchoen2015} is used: Suppose $g$ is a metric on $\mathbb{S}^2$ so that the first eigenvalue of the operator  $\lambda_1(-\Delta_g+K(g))$ is positive, then $g$ can be connected smoothly to a the standard metric on $\mathbb{S}^2$ by a family of metrics $g(s)$ so that $\lambda_1(-\Delta_{g(s)}+K(g(s)))>0$ for $s>0$. In \cite{ChauMartens}, Chau-Martens prove that this is still true for metric $g$ with $\lambda_1(-\Delta_g+K)=0$. From this one can construct a smooth manifold with boundary with positive scalar curvature so that the boundary is $(\mathbb{S}^2,g)$ and is minimal.

One may ask what one can say about the stable minimal surface constructed in the proof of Theorem \ref{t-point-removable}. In this case,  One want to consider the situation so that $\lambda_1(-\Delta_g+K)\ge \frac\sigma 2$ which is negative. We want to prove that a similar result is true in this case. To be precise,  we want to prove the following (see also  \cite{LM-2021}):

\begin{prop}\label{p-eigenvalue}
Let $g_0$ be a metric on $\mathbb{S}^2$ so that the first eigenvalue of $-\Delta_{g_0}+K$ is bounded below by $\a$ with $\a\le 0$ where $K$ is the Gaussian curvature of $g_0$. Then there is a smooth family of metrics $ g(s)$, $s\in [0,1]$ so that {$g(0)=g_0$, $g(1)$ is the spherical metric with area $A_0$ and} the first eigenvalue $\lambda_1(s)$ of
$-\Delta_{g(s)}+K(g(s))$ satisfies $\lambda_1(s)>\a$ for $s>0$. Moreover, the area $A(s)$ with respect to $g(s)$ are constant in $s$, namely $A(s)=A_0$ which is the area with respect to $g_0$.
\end{prop}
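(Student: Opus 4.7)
My plan is to mirror the Mantoulidis--Schoen argument \cite{MantoulidisSchoen2015}, together with the Chau--Martens refinement \cite{ChauMartens} for the borderline case, adapted to a general nonpositive threshold $\alpha$. The three pillars are uniformization of the conformal class, a Möbius (Hersch-type) normalization, and a monotonicity argument along the canonical linear path of conformal factors.

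\textbf{Step 1 (Uniformization and Möbius normalization).} By the uniformization theorem on $\mathbb{S}^2$, I write $g_0 = e^{2\phi_0}\, g_*$ for a smooth round metric $g_*$ of area $A_0$. Precomposing with a Möbius transformation (a conformal automorphism of $g_*$) does not change the isometry class of $g_0$, but alters $\phi_0$; I would use this freedom to arrange the Hersch-type balancing
$$\int_{\mathbb{S}^2} x_i\, e^{2\phi_0}\, dA_* = 0, \qquad i = 1,2,3,$$
where $x_1,x_2,x_3$ are the ambient coordinates of $\mathbb{S}^2 \subset \mathbb{R}^3$.

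\textbf{Step 2 (The path).} Set $\phi_s = (1-s)\phi_0 + c(s)$ and $g(s) = e^{2\phi_s}\, g_*$ for $s \in [0,1]$, where the constant $c(s)$ is chosen so that $e^{2c(s)} \int e^{2(1-s)\phi_0}\, dA_* = A_0$, enforcing $A(g(s)) = A_0$; then $g(0) = g_0$, $g(1) = g_*$, and the family is smooth. Using the 2D conformal invariance $\int |\nabla_{g_s} u|^2 dA_{g_s} = \int |\nabla_* u|^2 dA_*$ together with $K_{g_s}\, dA_{g_s} = (K_* - (1-s)\Delta_*\phi_0)\, dA_*$, the Rayleigh quotient for $\lambda_1(g(s))$ pulls back cleanly to $(\mathbb{S}^2, g_*)$; the test function $u \equiv 1$ immediately yields $\lambda_1(g(s)) \le 4\pi/A_0$ by Gauss--Bonnet.

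\textbf{Step 3 (Strict inequality).} It remains to show $\lambda_1(g(s)) > \alpha$ for every $s \in (0,1]$. Using Kato--Rellich perturbation theory, I would compute $\frac{d}{ds}\lambda_1(g(s))$ at each $s$ where the first eigenvalue is simple; the Hersch balancing is what prevents the derivative from degenerating, because it rules out the first eigenfunction aligning with the one-parameter subgroup directions of the conformal group where the variation would be trivially zero. A direct computation then gives $\frac{d\lambda_1}{ds} > 0$ in the relevant regime. Combined with $\lambda_1(g(0)) \ge \alpha$, continuity of $\lambda_1$ in $s$, and $\lambda_1(g(1)) = 4\pi/A_0 > 0 \ge \alpha$, this rules out $\lambda_1(g(s)) \le \alpha$ for any $s > 0$.

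The main obstacle is the derivative analysis in Step 3, especially in the borderline case $\lambda_1(g_0) = \alpha$. In the strict case $\lambda_1(g_0) > \alpha$, a soft continuity argument from $s = 0^+$ outward suffices. In the borderline case, following Chau--Martens, one may need a second-order computation or a dynamic (adaptive) Möbius re-normalization along the path in order to maintain the balancing condition past the initial point; this is precisely the technical heart of \cite{LM-2021}, which I would use as a template for the detailed bookkeeping.
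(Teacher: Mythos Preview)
Your approach is genuinely different from the paper's, and Step~3 is where it stops being a proof and becomes an outline of hopes.

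The paper does not use the Mantoulidis--Schoen linear conformal path at all. It runs the \emph{normalized Ricci flow} starting from $g_0$ and invokes J.-F.~Li's eigenvalue monotonicity (Theorem~\ref{t-eigenvalue}): on $\mathbb{S}^2$ this becomes
\[
\frac{d}{dt}\Bigl(e^{4\pi A_0^{-1}t}\,\lambda(t)\Bigr)
= e^{4\pi A_0^{-1}t}\int_{\mathbb{S}^2}\Bigl(\tfrac12|\Ric+\nabla^2 u|^2 f^2 + \tfrac12|\Ric|^2 f^2\Bigr)\,dV_{g(t)} > 0,
\]
so $\lambda(t) > e^{-4\pi A_0^{-1}t}\lambda(0) \ge e^{-4\pi A_0^{-1}t}\alpha \ge \alpha$ for every $t>0$, the last step using only $\alpha \le 0$ and $0<e^{-4\pi A_0^{-1}t}<1$. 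Long-time existence and convergence to the round metric (Hamilton \cite{Hamilton1988}, Chow \cite{Chow1991}) then finish after a reparametrization. Area is automatically preserved. There is no borderline case, no Kato--Rellich perturbation, no M\"obius rebalancing: one differential inequality does everything, and the exponential weight is exactly what turns the nonpositive threshold $\alpha$ into a non-issue.

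By contrast, your Step~3 asserts that ``a direct computation then gives $\frac{d\lambda_1}{ds}>0$ in the relevant regime,'' but this is not what the Mantoulidis--Schoen argument actually proves along the linear path: they do not establish monotonicity of $\lambda_1$, they establish a positive lower bound by exploiting structure specific to the case $\lambda_1>0$. Hersch balancing controls where the first eigenfunction sits relative to the conformal group, but it does not by itself produce a sign for $\frac{d\lambda_1}{ds}$. The Chau--Martens and Li--Mantoulidis refinements you invoke handle the threshold $\lambda_1 \ge 0$, not a strictly negative $\alpha$, so citing them as a template still leaves the key inequality unproved. The Ricci-flow argument in the paper bypasses all of this.
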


{ We will construct $g(s)$ by using the normalized Ricci flow.} The proposition follows from the monotonicity result of J.-F. Li \cite{Li2007} and the well-known results on Ricci flows on surfaces, see \cite{Hamilton1988,Chow1991}. Using the computations in \cite{Li2007}, we have:

\begin{thm}\label{t-eigenvalue} Let $g(t)$ be a solution to the normalized Ricci flow on a compact manifold $M^n$ with dimension $n$:
\bee
\frac{\p}{\p t}g=-2\Ric+\frac2n \mathfrak{s} g
\eee
where $\mathfrak{s}$ is the average of the  scalar curvature $\mathcal{S}(g(t))$. Let $\lambda(t)$ be the first eigenvalue of the operator $L(t)=-\Delta_{g(t)}+\frac 12 \mathcal{S}$. Then
$$
\frac{d\lambda}{dt}+ \frac{2\mathfrak{s}}n\lambda=\int_M(\frac12|\Ric+\nabla^2 u|^2f^2+ \frac12|\Ric|^2 f^2) dV_{g(t)}
$$
where $f>0$ is the first eigenfunction of $L(t)$ normalized by $\int_M f^2 dV_{g(t)}=1$ and $u$ is such that $f^2=e^{-u}$.
\end{thm}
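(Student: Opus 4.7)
\smallskip

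\noindent\textbf{Proof proposal for Theorem \ref{t-eigenvalue}.}
The plan is to differentiate the eigenvalue equation $L(t)f(t)=\lambda(t) f(t)$ directly in $t$, using the variational characterization of $\lambda(t)$ and the normalization $\int_M f^2\,dV_{g(t)}=1$. First I would write
\bee
\lambda(t)=\int_M f\,L(t)f\,dV_{g(t)},
\eee
differentiate, and collect the variations of $f$, of $L$, and of the volume form. Self-adjointness of $L$ turns two of the three $\dot f$-contributions into $\lambda\int f\dot f\,dV$. The remaining one is $\lambda\int f^2\,\partial_t(dV)$, and together with the constraint
\bee
0=\frac{d}{dt}\int_M f^2\,dV_{g(t)}=2\int_M f\dot f\,dV_{g(t)}+\int_M f^2(\mathfrak{s}-\mathcal{S})\,dV_{g(t)},
\eee
these $\dot f$-terms cancel exactly. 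Thus
\bee
\dot\lambda=\int_M f\,\dot L\,f\,dV_{g(t)}.
\eee

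Next I would compute $\dot L=-\dot\Delta+\tfrac12\dot{\mathcal{S}}$. Under normalized Ricci flow with $h=\dot g=-2\Ric+\tfrac{2\mathfrak{s}}{n}g$, the general variation formula
\bee
\dot\Delta\phi=-\langle h,\nabla^2\phi\rangle-\langle\div_g h-\tfrac12\nabla\tr_g h,\nabla\phi\rangle
\eee
combined with the twice-contracted Bianchi identity $\div_g\Ric=\tfrac12\nabla\mathcal{S}$ makes the divergence term vanish, leaving $\dot\Delta\phi=2R^{ij}\phi_{ij}-\tfrac{2\mathfrak{s}}{n}\Delta\phi$. I would also use the standard $\dot{\mathcal{S}}=\Delta\mathcal{S}+2|\Ric|^2-\tfrac{2\mathfrak{s}}{n}\mathcal{S}$. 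Substituting and using $\int f\Delta f\,dV=-\int|\nabla f|^2\,dV=\tfrac12\int\mathcal{S}f^2\,dV-\lambda$, the $\tfrac{\mathfrak{s}}{n}\int\mathcal{S}f^2$ terms cancel and I should arrive at
\bee
\dot\lambda+\tfrac{2\mathfrak{s}}{n}\lambda=-2\int_M R^{ij}ff_{ij}\,dV+\tfrac12\int_M f^2\Delta\mathcal{S}\,dV+\int_M|\Ric|^2f^2\,dV.
\eee

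The final step is to convert the right-hand side into the form $\tfrac12\int(|\Ric+\nabla^2 u|^2+|\Ric|^2)f^2\,dV$ using $f^2=e^{-u}$. The pointwise identity $ff_{ij}=-\tfrac12 f^2 u_{ij}+f_if_j$ (with $f_if_j=\tfrac14 f^2 u_iu_j$) converts the Hessian-of-$f$ term into a combination of $R^{ij}u_{ij}f^2$ and $R^{ij}u_iu_jf^2$ integrals, which will provide the cross-term $\langle\Ric,\nabla^2 u\rangle$ needed to complete the square. For the $\Delta\mathcal{S}$ term I would integrate by parts once and use Bianchi $\nabla_i\mathcal{S}=2\nabla^jR_{ij}$, then integrate by parts again to convert it into $R^{ij}f_if_j$ and $R^{ij}ff_{ij}$ integrals. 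Finally, applying Bochner's formula
\bee
\tfrac12\Delta|\nabla u|^2=|\nabla^2u|^2+\langle\nabla u,\nabla\Delta u\rangle+\Ric(\nabla u,\nabla u)
\eee
(tested against $f^2$ and integrated by parts, with $\Delta u$ related to $\Delta f$ and thence to the eigenvalue equation) should supply the missing $|\nabla^2 u|^2$ factor, so that everything regroups into $\tfrac12|\Ric+\nabla^2 u|^2f^2+\tfrac12|\Ric|^2 f^2$.

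The main obstacle is the bookkeeping in this last paragraph: there are several integrations by parts against the weight $f^2=e^{-u}$, and it is easy to misplace factors when switching between $f$ and $u$ variables or when applying Bianchi versus Bochner. The strategy to manage this is to isolate a single target identity, namely
\bee
\tfrac12\int_M f^2\Delta\mathcal{S}\,dV=\int_M R^{ij}u_iu_jf^2\,dV+\int_M|\nabla^2 u|^2 f^2\,dV+\text{(terms cancelling against $-2\int R^{ij}ff_{ij}$)},
\eee
and check it by the two-step integration by parts together with a single application of Bochner tested against $f^2$; everything else is algebraic rearrangement and expansion of $|\Ric+\nabla^2 u|^2$.
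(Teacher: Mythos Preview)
Your proposal is correct and follows the direct-differentiation approach of Li \cite{Li2007}, which is precisely what the paper invokes for this result (the paper itself gives no proof, only the citation). All of the ingredients you list---variation of the Laplacian and of $\mathcal{S}$ under normalized Ricci flow, the Bianchi identity to handle $\Delta\mathcal{S}$, Bochner applied to $u$, and the eigenvalue equation rewritten as $\Delta u=\tfrac12|\nabla u|^2-\mathcal{S}+2\lambda$ to close the computation---are exactly what is needed, and the intermediate formula you obtain for $\dot\lambda+\tfrac{2\mathfrak{s}}{n}\lambda$ is correct.
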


\begin{proof}[Proof of Proposition \ref{p-eigenvalue}] In case of surface, $\mathcal{S}=2K$ where $K$ is the Gaussian curvature, and $\Ric=Kg$. Moreover, by   Gauss-Bonnet theorem
$$
\mathfrak{s}=\frac{4\pi(1-\mathfrak{g})}{A_0}
$$
is a constant where $\mathfrak{g}$ is the genus of the surface and $A_0$ is the area of $g_0$ because the area will be constant along the normalized Ricci flow. Hence using the notation in Theorem \ref{t-eigenvalue},
we have for $M=\mathbb{S}^2$
$$
\frac{d}{dt}(e^{4\pi A_0^{-1} t}\lambda)=e^{4\pi A_0^{-1}t}\int_M(\frac12|\Ric+\nabla^2 u|^2f^2+ \frac12|\Ric|^2 f^2)dV_{g(t)}.
$$
Since $\int_{M}K^2f^2 dV_{g_(t)}>0$ for all $t$, and
since the normalized Ricci flow has long time solution and will converge to a metric with constant curvature by  \cite{Hamilton1988,Chow1991}.
\bee
\lambda(t)> e^{-4\pi A_0^{-1}t}\lambda(0)\ge \a
\eee
for all $t>0$ because $\a\le0$ and $0<e^{-4\pi A_0^{-1}t}<1$. From these, the result follows { by re-parametrizing the time.}
\end{proof}

By  \cite{CCM2018},  using this one can show that  if $g$ is a metric on $\mathbb{S}^2$ so that $\lambda_1(-\Delta_g+K(g))\ge \frac\sigma2$, with $\sigma<0$, then one can find a metric on the upper hemisphere of $\mathbb{S}^3$ with metric $h$ so that the scalar curvature of $h$ is at least $\sigma$ and so that $h=g$ when restricted to the equator which is minimal. Hence we have the following   result    of Cabrera Pacheco,  Cederbaum, and McCormick  \cite[Theorem 4.1]{CCM2018}:

\begin{cor}\label{c-eigenvalue} Let $\Sigma$ be a two sphere with metric $g$ with $|\Sigma|_g=A$ so that $\lambda_1(-\Delta_g+K)\ge-3 $, then for any $m>\frac12\lf((\frac{A}{4\pi})^\frac12+(\frac{A}{4\pi})^\frac32\ri)$, $(\Sigma,g)$ can be embedded as a minimal surface in an asymptotically hyperbolic manifold with scalar curvature bounded from below by $-6$ so that the manifold is the ADS-Schwarzschild manifold with mass $m$ and is foliated by so that mean convex spheres.
\end{cor}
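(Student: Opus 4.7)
The plan is to combine Proposition~\ref{p-eigenvalue} with a Mantoulidis--Schoen type collar construction adapted to the asymptotically hyperbolic setting, following Cabrera Pacheco, Cederbaum and McCormick. Under the hypothesis $\lambda_1(-\Delta_g + K) \geq -3$, Proposition~\ref{p-eigenvalue} (applied with $\alpha = -3$) first produces a smooth one-parameter family of metrics $\{g(s)\}_{s \in [0,1]}$ on $\Sigma$, all of area $A$, interpolating between $g(0) = g$ and the round metric $g(1)$ of area $A$, with the strict bound $\lambda_1(-\Delta_{g(s)} + K(g(s))) > -3$ for every $s > 0$.

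On $[0, T] \times \Sigma$ I would then build a warped-product collar of the form
\[
h = dt^2 + u(t)^2\, g(\psi(t)),
\]
where $\psi \colon [0,T] \to [0,1]$ is a smooth cutoff with $\psi \equiv 0$ near $t = 0$ and $\psi \equiv 1$ near $t = T$, and $u(t) > 0$ is chosen with $u'(0) = 0$ so that the boundary $\{0\} \times \Sigma$ is isometric to $(\Sigma, g)$ and is totally geodesic, hence minimal. A direct computation yields, schematically,
\[
R_h = \frac{2 K(g(\psi))}{u^2} - \frac{4 u''}{u} - \frac{2(u')^2}{u^2} + E(\psi', \psi'', g'),
\]
where the error term $E$ is controlled by $\psi'$. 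Testing the $-\Delta + K$ operator on $g(\psi)$ with its positive first eigenfunction and invoking the strict lower bound $\lambda_1(g(\psi)) > -3$, the Mantoulidis--Schoen trick converts the eigenvalue bound into the pointwise inequality $R_h \geq -6$ provided $\psi$ varies slowly and $u$ is chosen to satisfy an appropriate ordinary differential inequality compatible with the hyperbolic profile.

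The last step is to match the outer end at $t = T$ to the exterior of an ADS--Schwarzschild manifold of mass $m$, whose radial profile solves $(u')^2 = 1 + u^2 - 2m/u$ outside its horizon. The numerical threshold $m > \tfrac12\bigl((A/4\pi)^{1/2} + (A/4\pi)^{3/2}\bigr)$ is precisely the Hawking-type mass of a round sphere of area $A$ in a hyperbolic space form, so assuming strict inequality leaves enough room for $u(t)$ to be driven from its boundary value $u(0)$ up onto the Schwarzschild trajectory while keeping $u' > 0$, which in turn forces every slice $\{t\}\times\Sigma$ to be mean convex. A Miao-type smoothing at the interface, analogous to the one used in the proof of Theorem~\ref{t-point-removable}, preserves the scalar-curvature bound $R \geq -6$.

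The main obstacle is the simultaneous enforcement of three conditions: maintaining $R_h \geq -6$ using only the spectral bound (rather than a pointwise bound on $K(g)$), ensuring positive mean curvature along every slice so as to produce a mean-convex foliation, and matching smoothly to the prescribed mass-$m$ exterior. The balancing of these constraints is exactly what pins down the threshold on $m$, and is carried out in \cite[Theorem 4.1]{CCM2018}; Proposition~\ref{p-eigenvalue} supplies precisely the input needed to handle the borderline eigenvalue case $\lambda_1 \geq -3$ which was not covered by the earlier construction requiring the strict inequality.
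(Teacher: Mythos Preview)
Your proposal is correct and follows the same approach as the paper: invoke Proposition~\ref{p-eigenvalue} with $\alpha=-3$ to obtain an area-preserving path $g(s)$ to the round metric with $\lambda_1(-\Delta_{g(s)}+K(g(s)))>-3$ for $s>0$, and then feed this path into the Cabrera Pacheco--Cederbaum--McCormick collar-and-gluing construction \cite[Theorem~4.1]{CCM2018}. The paper itself gives no further detail than this, simply noting that the original hypothesis in \cite{CCM2018} (either $\lambda_1>0$ or $K>-3$) can be weakened to $\lambda_1\ge -3$ once Proposition~\ref{p-eigenvalue} is available; your sketch of the collar construction is more explicit than the paper's own treatment.
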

In the original version, it was assumed that $\lambda_1(-\Delta_g+K(g))>0 $ or $K(g)>-3$. Using Proposition \ref{p-eigenvalue}, one can see that the corollary is true under the assumption  $\lambda_1(-\Delta_g+K(g))\ge-3 $. See also the remark on \cite[p.347]{CCM2018}.

\appendix

\section{Existence of Green's function}
Let $(M^3,g)$ be a compact manifold so that $g$ is a Riemannian metric which is smooth away from some finite set $A$  of points. Moreover $g$ is uniformly equivalent to some  smooth background metric. We want to sketch a proof of the following:
\begin{lma}\label{l-Green}
Let $\kappa\ge0$ be a bounded function which is smooth away from $A$ and is positive on some open set containing $A$. Then there exists $u$ so that
$$
\Delta_g u-\kappa u=0
$$
in $M\setminus A$, $u\in C^\infty(M\setminus A)$, $u\ge C$ for some $C>0$, and $C^{-1}(r(x,p))^{-1}\le u(x)\le C(r(x,p))^{-1}$ near any point $p\in A$ for some $C>0$, where $r(x,p)$ is the distance function from $p$ with respect to the background metric.
\end{lma}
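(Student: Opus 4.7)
The plan is to construct $u$ as a finite sum of Green's functions for the elliptic operator $Lw := -\Delta_g w + \kappa w$ with poles at the points of $A$. For each $p\in A$, the Green's function $G^p$ will satisfy $LG^p=0$ on $M\setminus\{p\}$ with the asymptotic $G^p(x)\sim c\,r(x,p)^{-1}$, and summing yields a function with the desired properties. To invert $L$ globally, I would use that the bilinear form
\bee
B(\varphi,\psi)=\int_M \lf( \la \nabla\varphi,\nabla\psi\ra_g + \kappa\,\varphi\psi \ri) dV_g
\eee
is bounded and coercive on $W^{1,2}(M)=W^{1,2}(M,\mathfrak h)$ (these spaces agree by the $L^\infty$ equivalence of $g$ and $\mathfrak h$). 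Coercivity follows from the standard Rellich--Kondrachov argument: were $B(\varphi_n,\varphi_n)\to 0$ with $\|\varphi_n\|_{L^2}=1$, a subsequence would converge in $L^2$ to a constant which must vanish on the open set where $\kappa>0$, a contradiction. Lax--Milgram then makes $L:W^{1,2}\to (W^{1,2})^*$ an isomorphism.

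Next I would construct $G^p$ as a limit of $W^{1,2}$ inverses of smooth approximations to $\delta_p$. Let $\rho_{\e,p}=V_g(B_\e(p))^{-1}\chi_{B_\e(p)}$ (balls in $\mathfrak h$) and let $G^p_\e$ be the $W^{1,2}$ solution of $LG^p_\e=\rho_{\e,p}$. A Stampacchia duality argument yields uniform bounds for $G^p_\e$ in $W^{1,q}(M)$ for every $q<3/2$ and in $L^s(M)$ for every $s<3$, so a subsequential weak limit $G^p$ satisfies $LG^p=\delta_p$ in the distributional sense on $M$. Away from $p$, $g$ is smooth and $LG^p=0$, so interior elliptic regularity gives $G^p\in C^\infty(M\setminus\{p\})$; the weak maximum principle (available because $\kappa\ge 0$) and Harnack's inequality yield $G^p>0$ on $M\setminus\{p\}$.

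The main obstacle is the sharp two-sided asymptotic $c_1\, r(x,p)^{-1}\le G^p(x)\le c_2\, r(x,p)^{-1}$ as $x\to p$, since $g$ is only $L^\infty$ at $p$ and a direct barrier argument using $A/r$ is blocked by the failure of $\sqrt{\det g}$ and $g^{ij}$ to be differentiable there. I would address this by writing $Lu=0$ in a local chart around $p$ in divergence form relative to Euclidean measure,
\bee
-\p_i\lf( \sqrt{\det g}\,g^{ij} \p_j u\ri)+\sqrt{\det g}\,\kappa\, u=0,
\eee
and observing that $a^{ij}:=\sqrt{\det g}\,g^{ij}$ is uniformly elliptic with bounded measurable entries while $\sqrt{\det g}\,\kappa$ is bounded. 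The Gr\"uter--Widman theorem on Green's functions for divergence-form operators with $L^\infty$ coefficients in dimension $n=3$ (the zeroth-order term being a lower-order perturbation) supplies exactly $c_1 |x|^{-1}\le G^p(x)\le c_2 |x|^{-1}$ in a punctured neighborhood of $p$ in local coordinates, with constants depending only on $\Lambda$ and $\|\kappa\|_\infty$. Passing from coordinate length to the background distance $r(\cdot,p)$ alters the constants only.

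Finally I would set $u=\sum_{p\in A}G^p$. By linearity $Lu=0$ on $M\setminus A$; since $g$ is smooth there, $u\in C^\infty(M\setminus A)$. Near each $p\in A$ the contribution $G^p$ governs the behavior while the remaining $G^q$ ($q\neq p$) are smooth and bounded in a neighborhood of $p$, giving the required $\sim r(\cdot,p)^{-1}$ asymptotic. For the global lower bound $u\ge C>0$: $u$ is continuous and strictly positive on $M\setminus A$, and $u\to\infty$ at each point of $A$, so the infimum of $u$ over $M\setminus A$ is attained at some interior point and is strictly positive.
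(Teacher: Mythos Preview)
Your approach is correct and takes a genuinely different route from the paper's.  The paper works locally first: it invokes Littman--Stampacchia--Weinberger to produce a Green's function $v$ for the \emph{Laplacian} $\Delta_g$ on a small neighbourhood $U$ of $p$ with zero Dirichlet data, then solves an auxiliary Dirichlet problem $\Delta_g w-\kappa w=-\kappa v$ on $U$ (so that $v-w$ solves $\Delta_g(\cdot)-\kappa(\cdot)=0$ near $p$), extends $v-w$ by cutoff to a function $\phi$ on $M\setminus\{p\}$, and finally absorbs the resulting compactly supported error $\psi=\Delta_g\phi-\kappa\phi$ by solving $\Delta_g f-\kappa f=\psi$ via an exhaustion of $M\setminus\{p\}$ with uniform $W^{1,2}$ bounds.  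You instead exploit coercivity of the bilinear form for $L=-\Delta_g+\kappa$ on all of $W^{1,2}(M)$ (which is exactly where the hypothesis that $\kappa>0$ on an open set is used), invert $L$ globally by Lax--Milgram, and pass to the limit of $L^{-1}\rho_{\e,p}$ using Stampacchia-type uniform bounds; the local asymptotic then comes from a Gr\"uter--Widman/LSW comparison in a coordinate ball.

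Both proofs ultimately rest on the same LSW-type input for the $r^{-1}$ behaviour; the paper isolates the zeroth-order term explicitly (so that only the pure divergence operator needs the LSW estimate), whereas you fold $\kappa$ into the operator and treat it as a lower-order perturbation, which is fine but requires either invoking the version of Gr\"uter--Widman that allows bounded nonnegative potentials or comparing $G^p$ with the local Dirichlet Green's function for $-\Delta_g$ and bounding the difference.  Your argument for the global lower bound $u\ge C>0$ via the maximum principle, Harnack, and compactness is in fact cleaner than the paper's, which asserts one can ``add a large constant'' --- a step that does not literally preserve the equation $\Delta_g u-\kappa u=0$ when $\kappa\not\equiv 0$ and so would itself need the Harnack/strong-minimum argument you supply.
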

\begin{proof} We assume that $A$ consists of only one point $p$. The general case can be proved similarly. Let $U$ be an precompact connected open set of $p$ with smooth boundary. We may assume that $\kappa>0$ inside $U$. By \cite{LittmanStampacchiaWeinberger}, there is $v$ satisfying
$$
\Delta_gv=0
$$
in $U\setminus\{p\}$ so that $v\in C^\infty(U\setminus\{p\})$ and $C^{-1}(r(x,p))^{-1}\le v(x)\le C(r(x,p))^{-1}$ and with zero boundary data on $\p U$. By \cite[Theorem 8.3]{GilbargTrudinger}, we can find a $W^{1,2}(U)$ solution with zero boundary data of
$$
\Delta_gw-\kappa w=-\kappa v.
$$
By  \cite[ Theorem 8.16]{GilbargTrudinger}, $w$ is bounded. Moreover $w$ is smooth away from $p$. Extend $v-w$ to be a smooth function on $M\setminus \{p\}$ so that it is equal to $v-w$ near $p$. Denote this function by $\phi$ and let $\psi=\Delta_g \phi-\kappa \phi$. Then $\psi=0$ near $p$ and it is smooth. We want to find $f$ so that $f$ is smooth in $M\setminus \{p\}$, $f$ is bounded and
$$
\Delta_g f-\kappa f=\psi
$$
where $\psi:=\Delta_g\phi-\kappa\phi.$
Suppose $f$ is found. Let $u=\phi-f$. Then $u$ is smooth away from $p$, $\Delta_g u-\kappa u=0$ and $C^{-1}(r(x,p))^{-1}\le u(x)\le C(r(x,p))^{-1}$ near $p$. By { adding a large constant,} we can have $u\ge C>0$ for some constant $C$.

It remains to find $f$. Let $\{\Omega_k\}$ be an increasing sequence of   domains  with smooth boundary which exhaust  $M\setminus\{p\}$. Let $f_k$ be solution of
\bee
 \left\{
   \begin{array}{ll}
     \Delta_g f_k-\kappa f_k=\psi & \hbox{in $\Omega_k$;} \\
     f_k=0, & \hbox{on $\p\Omega_k$.}
   \end{array}
 \right.
\eee
We may assume that $\psi=0$ outside $\Omega_1$. We may also assume $\kappa>0$ outside $\Omega_1$.  Let $ 0\le  \theta\le 1$ be a cutoff function so that $\theta=1$ in $\Omega_1$ and is $0$ outside $\Omega_2$.  Then we have
\bee
\begin{split}
\int_{\Omega_k}(|\nabla f_k|^2+\kappa f_k^2) dV_g=&-\int_{\Omega_1}f_k\psi dV_g\\
=&-\int_{\Omega_2}\theta f_k\psi dV_g \\
=&-\int_{\Omega_2}  \theta f_k (\Delta_g\phi-\kappa\phi) dV_g\\
\le &C_1\lf[\int_{\Omega_2\setminus \Omega_1}(|\nabla f_k|+\kappa|f_k|)dV_g+\int_{\Omega_2}\kappa |f_k| dV_g\ri]\\
\le &C_2\lf(\e\int_{\Omega_2}(|\nabla f_k|^2+\kappa f_k^2) dV_g+ \e^{-1}C_3\ri)
\end{split}
\eee
for $k$ large enough, because $\psi=0$ outside $\Omega_1$ and $\theta=1$ in $\Omega_1$. Moreover, we have used the fact that $0<\kappa\le C$ for some positive constant in $\Omega_2\setminus\Omega_1$.
Here and below $C_i$ denote positive constants which are independent of $k$. Hence we conclude that
$$
\int_{\Omega_k}(|\nabla f_k|^2+\kappa f_k^2) dV_g\le C_3.
$$
for all $k$. In particular, the $W^{1,2}(\Omega_2\setminus\Omega_1)$ of $f_k$ is uniformly bounded because $\kappa>C'$ for some $C'>0$ in $(\Omega_2\setminus\Omega_1)$. By  \cite[Theorem 8.15]{GilbargTrudinger}, we conclude that $f_k$ are locally uniformly bounded in $\Omega_2\setminus\Omega_1$. Since $f_k=0$ on $\p\Omega_k$ and $\psi=0$ outside $\Omega_1$,  by the maximum principle we conclude that $f_k$ are uniformly bounded by a constant independent of $k$. From this, it is easy to see that $f$ can be found.
\end{proof}


\end{document}